\numberwithin{equation}{section}
\newtheorem{Theorem}{Theorem}[section]
\newtheorem{Lemma}[Theorem]{Lemma}
\newtheorem{Proposition}[Theorem]{Proposition}
 { \theoremstyle{definition}
\newtheorem{Example}[Theorem]{Example}
\newtheorem{Remark}[Theorem]{Remark} }
\begin{document}

\allowdisplaybreaks

\newcommand{\arXivNumber}{1609.06247}

\renewcommand{\PaperNumber}{049}

\FirstPageHeading

\ShortArticleName{On the Spectra of Real and Complex Lam\'e Operators}

\ArticleName{On the Spectra of Real and Complex Lam\'e Operators}

\Author{William A.~HAESE-HILL~$^\dag$, Martin A.~HALLN\"AS~$^\ddag$ and Alexander P.~VESELOV~$^\dag$}

\AuthorNameForHeading{W.A.~Haese-Hill, M.A.~Halln\"as and A.P.~Veselov}

\Address{$^\dag$~Department of Mathematical Sciences, Loughborough University, \\
\hphantom{$^\dag$}~Loughborough LE11 3TU, UK}
\EmailD{\href{mailto:wahhill@gmail.com}{wahhill@gmail.com}, \href{mailto:A.P.Veselov@lboro.ac.uk}{A.P.Veselov@lboro.ac.uk}}

\Address{$^\ddag$~Department of Mathematical Sciences, Chalmers University of Technology and\\
\hphantom{$^\ddag$}~the University of Gothenburg, SE-412 96 Gothenburg, Sweden}
\EmailD{\href{mailto:hallnas@chalmers.se}{hallnas@chalmers.se}}

\ArticleDates{Received April 04, 2017, in f\/inal form June 21, 2017; Published online July 01, 2017}

\Abstract{We study Lam\'e operators of the form
\begin{gather*}
L = -\frac{d^2}{dx^2} + m(m+1)\omega^2\wp(\omega x+z_0),
\end{gather*}
with $m\in\mathbb{N}$ and $\omega$ a half-period of $\wp(z)$. For rectangular period lattices, we can choose $\omega$ and $z_0$ such that the potential is real, periodic and regular. It is known after Ince that the spectrum of the corresponding Lam\'e operator has a band structure with not more than $m$ gaps. In the f\/irst part of the paper, we prove that the opened gaps are precisely the f\/irst $m$ ones. In the second part, we study the Lam\'e spectrum for a generic period lattice when the potential is complex-valued. We concentrate on the $m=1$ case, when the spectrum consists of two regular analytic arcs, one of which extends to inf\/inity, and brief\/ly discuss the $m=2$ case, paying particular attention to the rhombic lattices.}

\Keywords{Lam\'e operators; f\/inite-gap operators; spectral theory; non-self-adjoint operators}

\Classification{34L40; 47A10; 33E10}

\section{Introduction}
The Lam\'e equation
\begin{gather*}%\label{intro_lame_eq}
-\frac{{\rm d}^2\psi}{{\rm d}z^2} + m(m+1) \wp(z)\psi = \lambda\psi,
\end{gather*}
where $m\in\mathbb N$ and $\wp(x)$ is Weierstrass' elliptic function, satisfying
\begin{gather*}%\label{wpCurve}
(\wp')^2 = 4(\wp-e_1)(\wp-e_2)(\wp-e_3),
\end{gather*}
is a classical object of 19\textsuperscript{th} century mathematics.

It surprisingly appeared again at the early age of quantum mechanics in the work by Kramers and Ittmann \cite{Kramers1,Kramers2}, who discovered that the Lam\'e equation is closely related to the quantum top by showing that the corresponding Schr\"{o}dinger equation is separable in the elliptic coordinate system and that the resulting dif\/ferential equations are of Lam\'{e} form (see, e.g.,~\cite{GV}). It is interesting that initially the equation was derived by Gabriel Lam\'e in 1837 as a result of separation of variables for the Laplace equation in elliptic coordinates, see~\cite{Lame}, so one might argue that its origin was quantum from the very beginning.

Viewed as an equation in the complex domain, its solutions, which have a number of remarkable properties, were described explicitly by Hermite and Halphen, see, e.g.,~\cite{WW}. Note that for~$x$ on the real line the potential $\wp(x)$ has singularities. If, however, $e_1$, $e_2$, $e_3$ are all real, one can make a pure imaginary half-period shift by $z_0 = \omega_3$ and consider the Lam\'e operator
\begin{gather}\label{intro_jac_lame}
L = -\frac{{\rm d}^2}{{\rm d}x^2} + m(m+1)\wp(x+z_0),
\end{gather}
whose potential is real, $2\omega_1$-periodic and regular on the whole of~$\mathbb{R}$. Consequently, one can apply Bloch--Floquet theory, which implies that the spectrum has a band structure, as described, e.g., in~\cite{ReedSimon}.

Generically, the spectrum of a periodic Schr\"odinger operator (or Hill operator) on the real line consists of a countably inf\/inite number of bands. It was Ince~\cite{Ince} who f\/irst pointed out the remarkable fact that the spectrum of $L$ has a band structure with not more than $m$ gaps. Erd\'elyi~\cite{erdelyi} later showed that in fact all $m$ gaps are open.

Nowadays, this is just the simplest example in the large class of f\/inite-gap operators discovered in the 1970s, see~\cite{DMN,L, N}. It turned out that all such operators can be described explicitly in terms of hyperelliptic Riemann theta functions, with the Lam\'e operator~\eqref{intro_jac_lame} corresponding to the elliptic case. However, it seems that the question of exactly which gaps in the spectrum are open has not been explicitly discussed in the literature even in this case.

The f\/irst result of this paper, obtained in Section~\ref{Sec:realLame}, demonstrates that in the Lam\'e case it is precisely the \emph{first} $m$ gaps that are open. Although this fact may not be surprising for experts, we could not f\/ind a rigorous proof in the literature. For $m=1$, the result is illustrated in Fig.~\ref{Fig:oneGap}, showing that all the closed gaps are indeed located in the inf\/inite spectral band.

\begin{figure}[t]\centering
\includegraphics[width=0.75\textwidth]{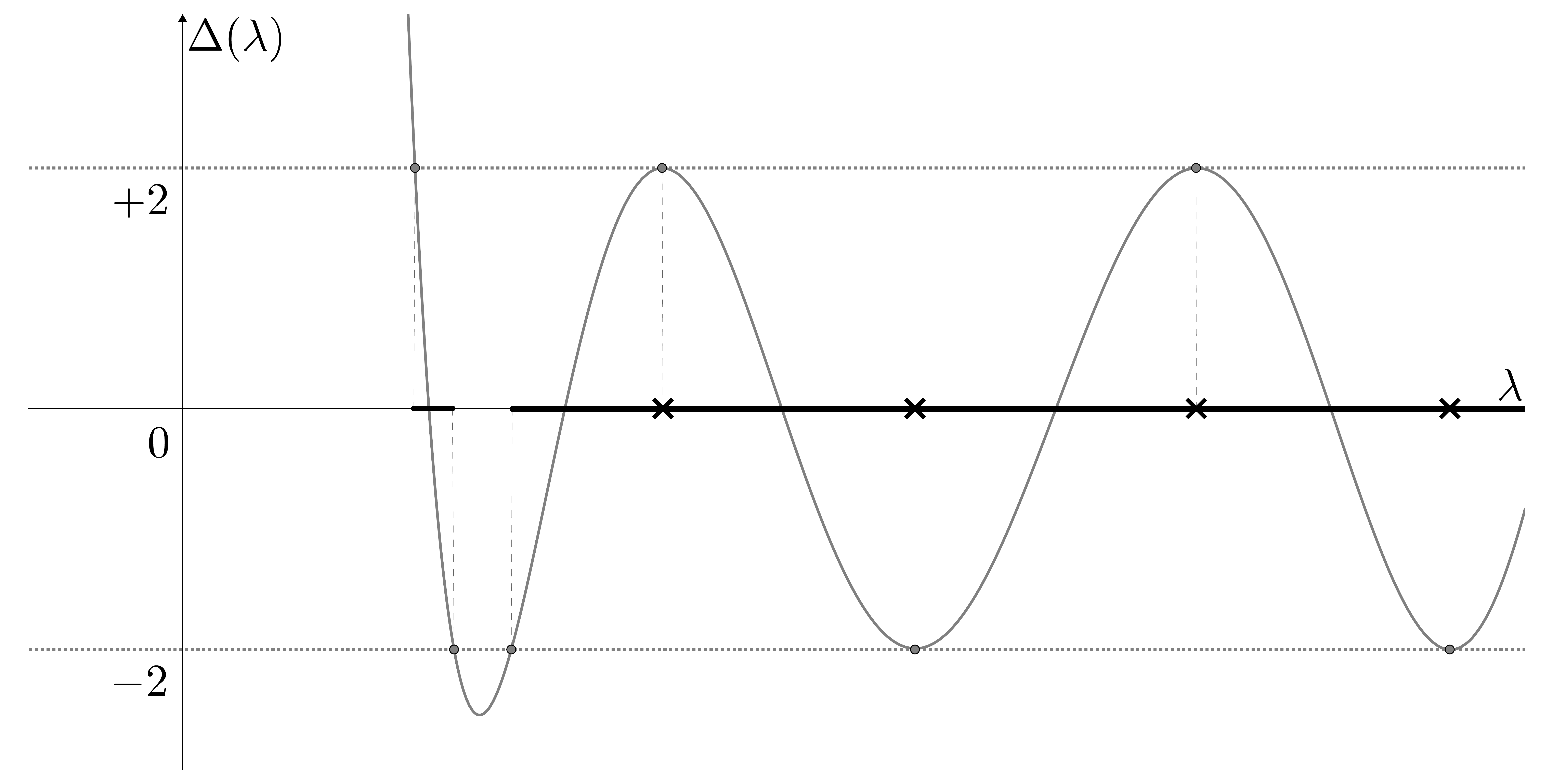}
\caption{The spectrum of the real Lam\'e operator~\eqref{intro_jac_lame} in the $m=1$ case, where~$\Delta(\lambda)$ is the discriminant and crosses denote closed gaps.}\label{Fig:oneGap}
\end{figure}

In the second part of the paper, we consider the Lam\'e operator with a complex-valued periodic potential
\begin{gather}\label{cplxPot}
V(x)= m(m+1)\omega^2\wp(\omega x+z_0),
\end{gather}
where $\omega$ is any half-period of $\wp(x)$, and the only assumption on $z_0 \in \mathbb{C}$ is that the corresponding potential is non-singular.

The spectral theory of Schr\"odinger operators with a complex periodic potential has been studied in a number of papers, see, e.g.,~\cite{Birnir,GW2,RB,T,Weikard} as well as references therein. In particular, Rofe-Beketov showed that the \emph{spectrum} of a Schr\"odinger operator
\begin{gather*}
L = -\frac{{\rm d}^2}{{\rm d}x^2} + u(x),
\end{gather*}
with periodic, regular, but complex-valued potential $u(x)$ can be def\/ined as the set of $\lambda\in\mathbb C$ such that at least one solution of the equation $L\psi=\lambda\psi$ is bounded on the whole real line. Equivalently, the corresponding Floquet multiplier~$\rho(\lambda)$, given by
\begin{gather*}
\psi(x+T,\lambda)=\rho(\lambda)\psi(x,\lambda),
\end{gather*}
where $T$ is a period of $u(x)$, should lie on the unit circle:
\begin{gather*}
|\rho(\lambda)|=1.
\end{gather*}

In the case of the Lam\'e operator, it follows from work of Gesztesy and Weikard \cite{GW1,GW2, Weikard}, who studied the general elliptic f\/inite-gap case, that the spectrum consists of f\/initely many regular analytic arcs, with precisely one arc extending to inf\/inity. In the general case of an operator with a periodic complex-valued potential, the question about the relative positions of the spectral arcs was raised earlier by Pastur and Tkachenko \cite{PT}, who gave an example in which the spectral arcs intersect at interior points. Gesztesy and Weikard~\cite{GW1} observed that more explicit examples are given by the spectrum of the $m=1$ Lam\'e operator for suitable rhombic period lattices, see also Proposition~\ref{Prop:lemn} and Figs.~\ref{Fig:rhombSpec} and~\ref{Fig:lemniSpec} below.

Note that if the shift $z_0\neq \omega_3$ in the Lam\'e operator~(\ref{intro_jac_lame}), we have in general a periodic, regular, but complex-valued potential. However, it is easy to see that the Floquet multipliers do not depend on~$z_0$, so that the spectrum is the same as in the self-adjoint case. Moreover, setting $\omega=\omega_1,\omega_3$ in~\eqref{cplxPot}, we have essentially Ince's result, since the corresponding operator is equivalent to the previous case.

In Section 3, we consider genuinely complex instances of the Lam\'e operator, concentrating on the simplest case $m=1$. Although this case was brief\/ly discussed earlier in a wider context by Gesztesy and Weikard \cite {GW1} and Batchenko and Gesztesy \cite{BG}, we believe that the complete picture is still missing in the literature.

We recall that Hermite's solutions of the corresponding Lam\'e equation
\begin{gather*}
-\frac{{\rm d}^2\psi}{{\rm d}z^2}+2\wp(z)\psi=\lambda \psi, \qquad \lambda=-\wp(k),
\end{gather*}
are given by
\begin{gather}\label{eigenfunction}
\psi(z,k)=\frac{\sigma(z+k)}{\sigma(z)\sigma(k)}\exp(-\zeta(k)z),
\end{gather}
where $k\in\mathbb{C}$ and $\sigma(z)$ and $\zeta(z)$ are the Weierstrass $\sigma$- and $\zeta$-function, see, e.g.,~\cite{WW}. Since the solutions have the Floquet property
\begin{gather*}
\psi(z+2\omega,k)=\exp(2\eta k-2\zeta(k)\omega)\psi(z,k),
\end{gather*}
with $\eta=\zeta(\omega)$, they remain bounded on the line $z=\omega x+z_0$, $x \in \mathbb R$, if and only if
\begin{gather*}%\label{floq_2}
u(k):=\operatorname{Re}[\eta k-\zeta(k)\omega]= 0.
\end{gather*}
By the result of Rofe-Beketov, the corresponding values of $\lambda=-\omega^2\wp(k)$ constitute the spectrum of the Lam\'e operator
\begin{gather}\label{2}
L= -\frac{{\rm d}^2}{{\rm d}x^2}+2\omega^2\wp(x\omega + z_0).
\end{gather}

Thus, the problem is to study the zero level of the real analytic function $u(k)$, $k \in \mathcal{E}^\times=\mathcal E\setminus 0$, where $\mathcal{E}={\mathbb{C}}/\mathcal{L}$ is the corresponding elliptic curve.

We show that $u(k)$ is a Morse function on $\mathcal{E}^\times$ provided the non-degeneracy conditions
\begin{gather*}
\eta+\omega e_j\neq 0,\qquad j=1,2,3,
\end{gather*}
are satisf\/ied. We note that this condition also appeared in Takemura's~\cite{Tak} study of analyticity properties of the eigenvalues of the $m=1$ Lam\'e operator.

Applying Morse theory arguments we show that, under some additional non-singularity assumptions, the spectrum of~\eqref{2} consists of precisely two non-intersecting regular analytic arcs, with one inf\/inite arc asymptotic to the positive half of the real line shifted by $-2\eta\omega$, and the remaining endpoints being $-\omega^2e_j$, $j=1,2,3$ (in agreement with~\cite{GW1} and~\cite{BG}).

The non-degeneracy conditions in general are dif\/f\/icult to control, but in the rhombic case we prove that there exists {\it exactly one exceptional elliptic curve}~$\mathcal E_*$ with the value of the $j$-invariant $j_*\approx 243.797$ (see the top right corner of Fig.~\ref{Fig:rhombkSpec} for the corresponding period rhombus). The corresponding spectrum has a tripod structure, see the top right plot in Fig.~\ref{Fig:rhombSpec}. Note that in this case the spectrum is a union of {\it three} simple regular analytic arcs, which is more than usually expected in the $m=1$ case.

\begin{figure}[t]\centering
\includegraphics[width=0.8\textwidth]{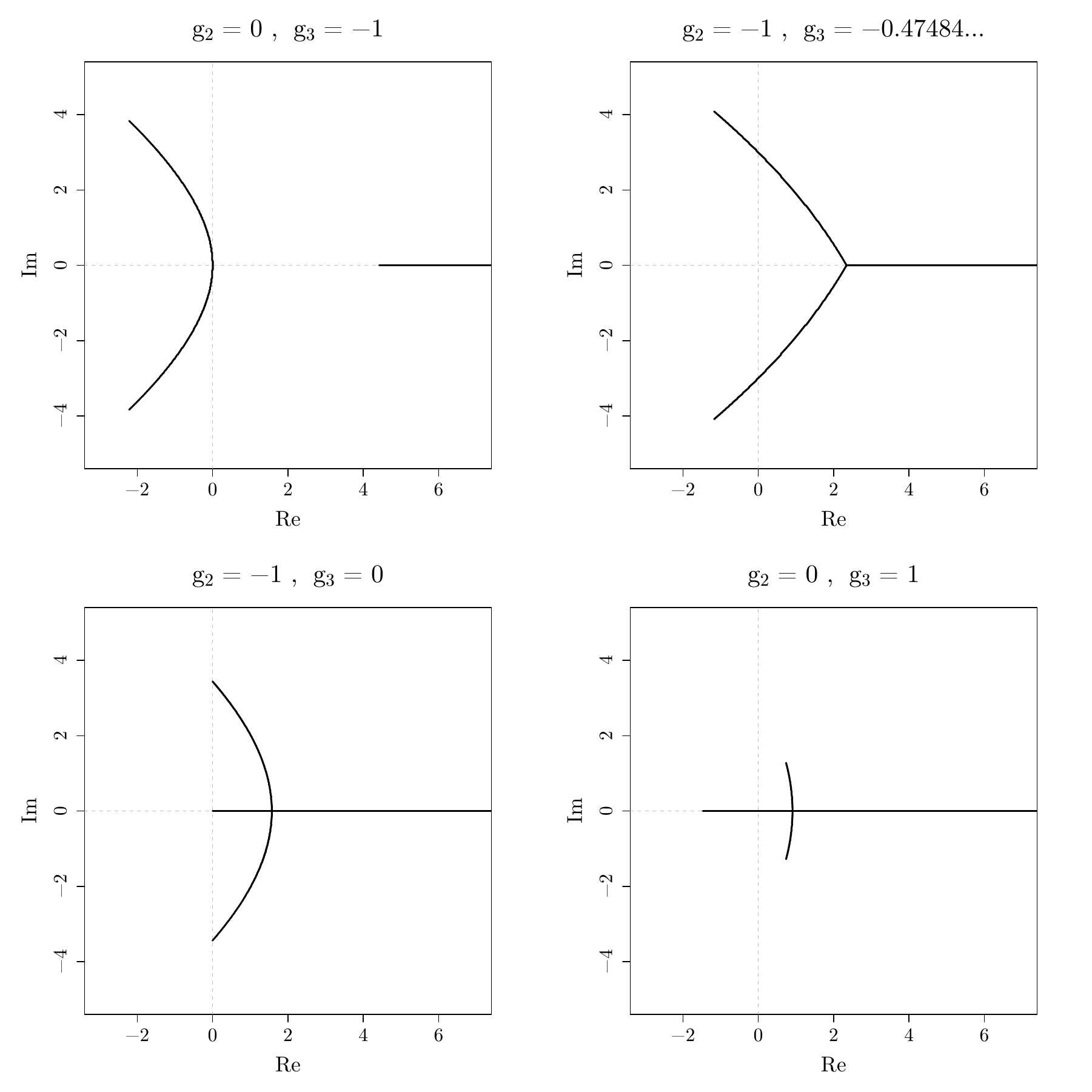}
\caption{Spectra of the complex Lam\'e operator (\ref{2}) in rhombic cases with $\omega=\omega_1$.}\label{Fig:rhombSpec}
\end{figure}

In the rectangular case with $\omega=\omega_2$ we show that all closed spectral gaps are contained in the spectral arc extending to inf\/inity.
Finally, we brief\/ly discuss the spectrum in the rhombic Lam\'e case with $m=2$.

\section{Spectral gaps for the real Lam\'e operator}\label{Sec:realLame}
The real Lam\'e operator (\ref{intro_jac_lame}) can be conveniently rewritten in the Jacobian form as
\begin{gather}\label{jacLamOp}
L_m=-\frac{{\rm d}^2}{{\rm d}x^2}+m(m+1)k^2\operatorname{sn}^2(x,k),
\end{gather}
where $0<k<1$, $m\in\mathbb{N}$ and $\operatorname{sn} (x,k)$ one of the Jacobi elliptic functions, see, e.g.,~\cite{WW}.

It is known after Ince \cite{Ince} and Erd\'elyi~\cite{erdelyi} that its spectrum has a band structure with exactly~$m$ gaps.
We are going to explain now which gaps are open.

\begin{Theorem}\label{lameThm}
The spectrum of the Lam\'e operator~\eqref{jacLamOp} has precisely the first $m$ gaps open.
\end{Theorem}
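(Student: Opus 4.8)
The plan is to combine the classical theory of Lam\'e functions with the oscillation theorem for Hill's equation, taking the finite-gap property as input. The potential in~\eqref{jacLamOp} has period $2K$, with $K=K(k)$ the complete elliptic integral of the first kind, so the band edges of $L_m$ are the periodic eigenvalues $\lambda_0<\lambda_1\le\lambda_2<\cdots$ (eigenfunctions of period $2K$) together with the antiperiodic ones $\mu_0\le\mu_1<\cdots$ (eigenfunctions with $\psi(x+2K)=-\psi(x)$). By the classical oscillation theorem (Haupt, Hamel) these interlace as
\begin{gather*}
\lambda_0<\mu_0\le\mu_1<\lambda_1\le\lambda_2<\mu_2\le\mu_3<\cdots,
\end{gather*}
and, so ordered, the corresponding eigenfunctions have $0$; $1,1$; $2,2$; $3,3$; $\dots$ zeros in the half-open period $[0,2K)$. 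Thus the $n$-th gap is bounded by the pair of band edges whose eigenfunctions have exactly $n$ zeros, and it is open precisely when that pair is nondegenerate.

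The heart of the matter is to locate the $2m+1$ simple band edges, which by the finite-gap property of Ince~\cite{Ince} and Erd\'elyi~\cite{erdelyi} are all the band edges apart from doubly degenerate ones. I would identify them with the $2m+1$ classical Lam\'e functions, namely the eigenfunctions of~\eqref{jacLamOp} expressible as $\operatorname{sn}^{\alpha}x\,\operatorname{cn}^{\beta}x\,\operatorname{dn}^{\gamma}x\,P(\operatorname{sn}^2 x)$ with $\alpha,\beta,\gamma\in\{0,1\}$ and $P$ a polynomial, which are known to occur exactly at the $2m+1$ branch points of the spectral curve, i.e., at the simple band edges. The key computation is to read off, from these explicit product forms, the number of zeros in $[0,2K)$ together with the behaviour under $x\mapsto x+2K$, which multiplies such a function by $(-1)^{\alpha+\beta}$ and so fixes its periodic versus antiperiodic type. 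The claim, to be verified by running through the finitely many admissible species for each $m$, is that the resulting multiset of zero counts is exactly $0,1,1,2,2,\dots,m,m$, with even counts attached to periodic and odd counts to antiperiodic solutions, in agreement with the oscillation pattern above.

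Granting this, the conclusion is immediate. The Lam\'e functions realize, for each $n\in\{1,\dots,m\}$, a pair of band edges whose eigenfunctions have $n$ zeros; since there are exactly $2m+1$ simple band edges and the Lam\'e functions account for all of them, these pairs are nondegenerate, so the gaps $n=1,\dots,m$ are open. As Ince's theorem permits at most $m$ open gaps, every gap with $n>m$ must then be closed. Hence the open gaps are precisely the first $m$, as claimed.

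The main obstacle is the middle step: carrying out the zero-counting and periodic/antiperiodic bookkeeping uniformly in $m$, checking that the several species of Lam\'e polynomials interleave so that each value $0,1,\dots,m$ is attained with the correct multiplicity and parity, and so that the list of Lam\'e eigenvalues matches the bottom $2m+1$ band edges furnished by the oscillation theorem. The case $m=1$, where the three Lam\'e functions are $\operatorname{dn}x$, $\operatorname{cn}x$, $\operatorname{sn}x$ with eigenvalues $k^2<1<1+k^2$ and zero counts $0,1,1$, already exhibits the whole mechanism and underlies Fig.~\ref{Fig:oneGap}.
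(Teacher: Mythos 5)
Your proposal is correct in substance and reaches the theorem by a genuinely different route from the paper. Both arguments rest on the same backbone, namely that the simple band edges are precisely the $2m+1$ Lam\'e-polynomial eigenvalues, but they diverge in how those edges are matched to gaps. The paper re-derives the backbone from Ince's substitution $\phi=\operatorname{am}x$ and the Magnus--Winkler coexistence theory \cite{MW} (Lemmas~\ref{Lemma:perSpec} and~\ref{Lemma:antperSpec}; the terminating Fourier series there are exactly your Lam\'e polynomials in disguise), and then locates the gaps by a deformation argument: as $k\to 0$ the $2m+1$ eigenvalues tend to $j^2$, $j=0,1,\ldots,m$, i.e., to the first $m$ gap locations of the free operator, and analyticity of the eigenvalues in $k$ together with the interlacing~\eqref{interlace} forbids the configuration from changing as $k$ increases. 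You instead take the backbone as classical input (Ince \cite{Ince}, Erd\'elyi \cite{erdelyi}) and argue at a fixed modulus $k$, locating the gaps via Haupt's oscillation theorem by counting zeros of the explicit eigenfunctions. Your route avoids any deformation or perturbation-theoretic argument, at the price of leaning on more of the classical Lam\'e theory; the paper's route is more self-contained but needs the limit $k\to0$ and the analyticity/interlacing step.

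Concerning the middle step you flag as the main obstacle: it does close, but be precise about what it requires. To extract the multiset $0,1,1,\ldots,m,m$ from the product forms $\operatorname{sn}^{\alpha}x\,\operatorname{cn}^{\beta}x\,\operatorname{dn}^{\gamma}x\,P(\operatorname{sn}^2x)$ you must know how the roots of $P$ distribute between $(0,1)$ and $(1,1/k^2)$, since only roots in $(0,1)$ produce real zeros (two per root in $[0,2K)$). That is Stieltjes' theorem (see \cite[Section~23.46]{WW} or \cite{Szego}): each species with $\deg P=n$ contains exactly $n+1$ eigenfunctions, one for each number $j\in\{0,\ldots,n\}$ of roots in $(0,1)$. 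Alternatively, you can bypass Stieltjes entirely with a pigeonhole argument: every Lam\'e polynomial has at most $\alpha+\beta+2\deg P=m-\gamma\le m$ zeros in $[0,2K)$; the parity of its zero count equals that of $\alpha+\beta$ (zeros away from $\{0,K\}$ pair up under $x\mapsto 2K-x$, and $P(0),P(1)\neq 0$ because eigenfunctions have only simple zeros), matching its (anti)periodic type; and the oscillation theorem allows at most one simple band edge with count $0$ and at most two with each count $n\ge 1$. Since the $2m+1$ Lam\'e eigenvalues are distinct simple (anti)periodic eigenvalues with counts capped at $m$, they must consist of exactly one with count $0$ and two with each count $1,\ldots,m$ --- which is your claimed multiset, with no species-by-species bookkeeping needed. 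With either patch your argument is a complete proof.
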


To prove this we follow the classical approach of Ince~\cite{Ince}, which we explain now following~\cite{MW}.

The Lam\'e potential $m(m+1)k^2\operatorname{sn}^2(x,k)$ is a real smooth periodic function with period $2K$, where
\begin{gather*}
K=\int_0^{\pi/2}\frac{{\rm d}\phi}{\sqrt{1-k^2\sin^2\phi}},
\end{gather*}
so the Lam\'e operator \eqref{jacLamOp} is of Hill type. By the oscillation theorem for Hill operators there exists a sequence of real numbers
\begin{gather}\label{interlace}
\lambda_0<\lambda^\prime_1\leq\lambda^\prime_2<\lambda_1\leq\lambda_2<\cdots<\lambda^\prime_{2n-1}\leq\lambda^\prime_{2n}<\lambda_{2n-1}\leq\lambda_{2n}<\cdots,
\end{gather}
going to inf\/inity, where $\lambda_j$ and $\lambda^\prime_j$ correspond to $2K$-periodic and $2K$-antiperiodic solutions of the Lam\'e equation
\begin{gather}\label{jacLamEq}
-\frac{{\rm d}^2\psi}{{\rm d}x^2}+m(m+1)k^2\operatorname{sn}^2(x,k)\psi=\lambda\psi
\end{gather}
respectively. The intervals $(\lambda_{2n-1},\lambda_{2n})$ and $(\lambda^\prime_{2n-1},\lambda^\prime_{2n})$ are spectral gaps, and the spectrum of the Lam\'e operator \eqref{jacLamOp} is the complement to the union of these gaps and the interval $(-\infty,\lambda_0)$. Such a gap is closed if and only if two linearly independent $2K$-periodic or $2K$-antiperiodic solutions of the Lam\'e equation~\eqref{jacLamEq} coexist for $\lambda=\lambda_{2n-1}=\lambda_{2n}$ or \smash{$\lambda=\lambda^\prime_{2n-1}=\lambda^\prime_{2n}$}, respectively.

So in this language we have to show that {\it all points of coexistence are located in the infinite spectral band.}

To prove this we f\/irst transform the Lam\'e equation~\eqref{jacLamEq} into an equation of Ince's type
\begin{gather}\label{inceeq}
(1+a\cos{2\phi})\frac{{\rm d}^2y}{{\rm d}\phi^2}+b(\sin{2\phi})\frac{{\rm d}y}{{\rm d}\phi}+(c+d\cos{2\phi})y=0,
\end{gather}
where $a$, $b$, $c$, $d$ are real parameters with $|a|<1$ (see~\cite{MW}). More precisely, by the substitutions
\begin{gather*}
\phi=\operatorname{am}x,
\end{gather*}
where the Jacobi amplitude $\operatorname{am}x=\operatorname{am}(k;x)$ is determined by
\begin{gather*}
x=\int_0^{\operatorname{am}x}\frac{{\rm d}\phi}{\sqrt{1-k^2\sin^2\phi}},
\end{gather*}
and $\psi(x)=y(\operatorname{am}x)$, we arrive at \eqref{inceeq} with
\begin{gather}\label{InceLamParam}
a=-b=\frac{k^2}{2-k^2},\qquad c=\frac{2\lambda-m(m+1)k^2}{2-k^2},\qquad d=\frac{m(m+1)k^2}{2-k^2}.
\end{gather}
Note that $x=2K$ corresponds to $\phi=\pi$, which is ref\/lected in the fact that the coef\/f\/icients in~\eqref{inceeq} are periodic with period $\pi$.

If Ince's equation \eqref{inceeq} has two linearly independent $\pi$-periodic or $\pi$-antiperiodic solutions, then we can f\/ind two solutions $y_1$, $y_2$ such that
\begin{gather}
\label{pievenoddsols}
y_1=\sum_{n=0}^\infty A_{2n}\cos2n\phi,\qquad y_2=\sum_{n=1}^\infty B_{2n}\sin2n\phi,
\end{gather}
or
\begin{gather}
\label{2pievenoddsols}
y_1=\sum_{n=0}^\infty A_{2n+1}\cos(2n+1)\phi,\qquad y_2=\sum_{n=0}^\infty B_{2n+1}\sin(2n+1)\phi.
\end{gather}
Substituting the periodic series into \eqref{inceeq} with \eqref{InceLamParam}, we obtain the recurrence relations
\begin{subequations}\label{Anrecurrence}
\begin{gather}
\Lambda_0A_0+P(-1)A_2=0,\\
2P(0)A_0+\Lambda_1A_2+P(-2)A_4=0,\\
P(n-1)A_{2n-2}+\Lambda_nA_{2n}+P(-n-1)A_{2n+2}=0,\qquad n\geq 2
\end{gather}
\end{subequations}
and
\begin{subequations}\label{Bnrecurrence}
\begin{gather}
\Lambda_1B_2+P(-2)B_4=0,\\
P(n-1)B_{2n-2}+\Lambda_nB_{2n}+P(-n-1)B_{2n+2}=0,\qquad n\geq 2
\end{gather}
\end{subequations}
where we have introduced
\begin{gather}\label{P}
P(z)=\frac{m(m+1)k^2}{4}-\frac{z k^2}{2}-z^2k^2,\\
\label{Lambda}
\Lambda_n=\lambda-\frac{m(m+1)k^2}{2}-\left(1-\frac{k^2}{2}\right)4n^2.
\end{gather}
Similarly, the coef\/f\/icients in the antiperiodic series should satisfy the recurrence relations
\begin{subequations}\label{Ansrecurrence}
\begin{gather}
(P^*(0)+\Lambda^*_0)A_1+P^*(-1)A_3=0,\\
P^*(n)A_{2n-1}+\Lambda^*_nA_{2n+1}+P^*(-n-1)A_{2n+3}=0,\qquad n\geq 1
\end{gather}
\end{subequations}
and
\begin{subequations}\label{Bnsrecurrence}
\begin{gather}
(-P^*(0)+\Lambda^*_0)B_1+P^*(-1)B_3=0,\\
P^*(n)B_{2n-1}+\Lambda^*_nB_{2n+1}+P^*(-n-1)B_{2n+3}=0,\qquad n\geq1
\end{gather}
\end{subequations}
with
\begin{gather}\label{Ps}
P^*(z)=\frac{m(m+1)k^2}{4}-\frac{(2z-1)k^2}{4}-\frac{(2z-1)^2k^2}{4},\\
\label{Lambdas}
\Lambda^*_n=\lambda-\frac{m(m+1)k^2}{2}-\left(1-\frac{k^2}{2}\right)(2n+1)^2.
\end{gather}

At this point, we f\/ind it convenient to consider even and odd values of~$m$ separately. Letting
\begin{gather*}
m=2\nu,\qquad \nu\in{\mathbb{Z}}_{\geq 0},
\end{gather*}
we have
\begin{gather*}
P(\nu)=P^*(-\nu)=0,
\end{gather*}
so that the inf\/inite tridiagonal matrices corresponding to the recurrence relations \eqref{Anrecurrence}--\eqref{Bnrecurrence} and \eqref{Ansrecurrence}--\eqref{Bnsrecurrence} become reducible.

In the periodic case, the f\/inite-dimensional parts of these matrices are
\begin{gather}\label{An_finite_mat}
K_{\nu,1}=
\begin{pmatrix}
\Lambda_0 & P(-1) & 0 & \cdots & \cdots & 0 \\
2P(0) & \Lambda_1 & P(-2) & 0 & \cdots & 0 \\
0 & P(1) & \Lambda_2 & P(-3) & \cdots & 0\\
\vdots & \ddots & \ddots & \ddots & \ddots & \vdots\\
0 & \cdots & 0 & P(\nu-2) & \Lambda_{\nu-1} & P(-\nu)\\
0 & \cdots & \cdots & 0 & P(\nu-1) & \Lambda_\nu
\end{pmatrix}
\end{gather}
and
\begin{gather}
\label{Bn_finite_mat}
K_{\nu,2}=
\begin{pmatrix}
\Lambda_1 & P(-2) & 0 & \cdots & 0 \\
P(1) & \Lambda_2 & P(-3) & \cdots & 0 \\
\vdots & \ddots & \ddots & \ddots & \vdots\\
0 & \cdots & P(\nu-2) & \Lambda_{\nu-1} & P(-\nu)\\
0 & \cdots & 0 & P(\nu-1) & \Lambda_\nu
\end{pmatrix},
\end{gather}
which correspond to sequences terminating at $n=\nu$ in the sense that $A_{2n}=B_{2n}=0$ for $n>\nu$.

Introducing the functions
\begin{gather*}
\delta_{\nu,1}(\lambda)=\det K_{\nu,1}(\lambda),\qquad \delta_{\nu,2}(\lambda)=\det K_{\nu,2}(\lambda),
\end{gather*}
we let
\begin{gather*}
S_{\nu,1}=\{\lambda \colon \delta_{\nu,1}(\lambda)=0\},\qquad S_{\nu,2}=\{\lambda \colon \delta_{\nu,2}(\lambda)=0\},
\end{gather*}
be the sets of corresponding eigenvalues of the Lam\'e operator~\eqref{jacLamOp}. Recalling~\eqref{P}, it is clear that all of\/f-diagonal entries of the matrices $K_{\nu,1}$ and $K_{\nu,2}$ are positive. From the general theory of tridiagonal (Jacobi) matrices (see, e.g.,~\cite{Szego}), it follows that $S_{\nu,1}$ and $S_{\nu,2}$ consist of $\nu+1$ respectively $\nu$ real and distinct eigenvalues.

By specializing \cite[Theorem 7.3]{MW} to the Lam\'e case, we f\/ind that $S_\nu:=S_{\nu,1}\cup S_{\nu,2}$ contains the simple part of the periodic spectrum, which we state as the following lemma.

\begin{Lemma} \label{Lemma:perSpec}
Let $m=2\nu$, $\nu\in{\mathbb{Z}}_{\geq 0}$. If $\lambda$ belongs to the periodic spectrum of the Lam\'e opera\-tor~\eqref{jacLamOp} and has multiplicity~$1$, then $\lambda\in S_\nu$.
\end{Lemma}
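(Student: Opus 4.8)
The plan is to prove the contrapositive: if $\lambda$ is a periodic eigenvalue of \eqref{jacLamOp} with $\lambda\notin S_\nu$, then its periodic solutions span a space of dimension at least~$2$. The starting observation is that the transformed equation \eqref{inceeq} is invariant under $\phi\mapsto-\phi$, because the coefficients $1+a\cos2\phi$ and $c+d\cos2\phi$ are even while $b\sin2\phi$ is odd. Hence if $y(\phi)$ is any $\pi$-periodic solution, then so is $y(-\phi)$, and its even and odd parts are separately $\pi$-periodic solutions, expanded as in \eqref{pievenoddsols} with coefficients obeying \eqref{Anrecurrence} and \eqref{Bnrecurrence}; at least one of these parts is nonzero.

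First I would exploit the reducibility at $P(\nu)=0$. For even $m=2\nu$ the vanishing $P(\nu)=0$ decouples the even recurrence \eqref{Anrecurrence} into the finite head block $K_{\nu,1}$ (the equations $n=0,\dots,\nu$) and a self-contained tail recurrence for $(A_{2\nu+2},A_{2\nu+4},\dots)$, whose leading equation $\Lambda_{\nu+1}A_{2\nu+2}+P(-\nu-2)A_{2\nu+4}=0$ no longer involves the head. The crucial point, and the one I would stress, is that the analogous reduction of the odd recurrence \eqref{Bnrecurrence} yields \emph{exactly the same} tail system, since for $n\geq\nu+1$ the two recurrences coincide after relabelling $A\leftrightarrow B$ (again because $P(\nu)=0$). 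Thus the even and odd solutions share one and the same tail recurrence.

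With this in hand the argument closes quickly. Assume $\lambda\notin S_\nu$, so that both $K_{\nu,1}$ and $K_{\nu,2}$ are invertible, and suppose that the even part $y_{\mathrm{even}}\neq0$ (the odd case being symmetric). Its tail $v=(A_{2\nu+2},A_{2\nu+4},\dots)$ decays, as $y_{\mathrm{even}}$ is a genuine periodic function, and $v\neq0$, for otherwise the head would satisfy $K_{\nu,1}(A_0,\dots,A_{2\nu})^{T}=0$ and invertibility would force $y_{\mathrm{even}}=0$. Using this same decaying tail $v$, I would then construct an odd solution by solving $K_{\nu,2}(B_2,\dots,B_{2\nu})^{T}$ against the single junction coupling $-P(-\nu-1)A_{2\nu+2}$; invertibility of $K_{\nu,2}$ gives a unique head, and since $v$ already solves the common tail recurrence the series $\sum B_{2n}\sin2n\phi$ is a nonzero, convergent, odd $\pi$-periodic solution. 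Then $y_{\mathrm{even}}$ and this odd solution are linearly independent, so $\lambda$ has multiplicity at least~$2$, which is the contrapositive.

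The main obstacle is really the structural coincidence of the even and odd tail recurrences at $P(\nu)=0$: this is precisely what lets a periodic solution of one parity be promoted to a pair of independent periodic solutions whenever its tail is nontrivial, and hence what pins every \emph{simple} periodic eigenvalue to a polynomial (terminating) solution, i.e.\ to a zero of $\det K_{\nu,1}$ or $\det K_{\nu,2}$. A minor point to check is the convergence and genuine periodicity of the constructed opposite-parity solution, which holds because its tail equals the already-decaying $v$ and its head is a finite vector; and for $\nu=0$ the head blocks degenerate but the same reasoning applies verbatim.
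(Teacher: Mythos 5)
Your proof is correct, but it takes a genuinely different route from the paper: the paper does not prove this lemma itself at all --- it obtains it by specializing \cite[Theorem~7.3]{MW} to the Lam\'e case --- whereas you give a direct, self-contained argument. Your central structural point (once $P(\nu)=0$ makes the recurrences \eqref{Anrecurrence} and \eqref{Bnrecurrence} reducible, the even and odd systems share one and the same tail recurrence, so that off $S_\nu$ the invertibility of $K_{\nu,1}$ and $K_{\nu,2}$ lets you promote any non-terminating periodic solution of one parity to a coexisting solution of the opposite parity) is precisely the mechanism that drives the coexistence theory for Ince's equation in \cite{MW}; in that sense you have reconstructed, in the special case at hand, the proof the paper outsources. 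What the citation buys the paper is brevity and uniformity: the same pair of results \cite[Theorems~7.3--7.4]{MW} also dispatches the antiperiodic Lemma~\ref{Lemma:antperSpec} and the odd-$m$ case with no further work. What your argument buys is transparency: it exhibits concretely why a simple periodic eigenvalue must be carried by a terminating (Lam\'e-polynomial) solution, i.e.\ must lie in the zero set of $\delta_{\nu,1}$ or $\delta_{\nu,2}$. Two points you flagged as minor do deserve an explicit sentence each: solutions of \eqref{inceeq} are real-analytic because the leading coefficient $1+a\cos 2\phi$ with $|a|<1$ never vanishes, so the Fourier coefficients of any periodic solution decay exponentially --- this is what justifies both the termwise derivation of the recurrences and the claim that your constructed opposite-parity series is a genuine solution (its image under the Ince operator is a continuous periodic function all of whose Fourier coefficients vanish, hence it is identically zero); and the final linear-independence step is immediate because a nonzero even function and a nonzero odd function are linearly independent, which is what converts coexistence into multiplicity two.
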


Turning to the the antiperiodic case, we note that the recurrence relations \eqref{Ansrecurrence}--\eqref{Bnsrecurrence} have no solutions given by terminating sequences. Instead, the relevant f\/inite-dimensional matrices
\begin{gather}\label{Ans_finite_mat}
K_{\nu,1}^*=
\begin{pmatrix}
P^*(0)+\Lambda^*_0 & P(-1) & 0 & \cdots & 0 \\
P^*(1) & \Lambda^*_1 & P^*(-2) & \cdots & 0 \\
\vdots & \ddots & \ddots & \ddots & \vdots\\
0 & \cdots & P^*(\nu-2) & \Lambda^*_{\nu-1} & P^*(-\nu+1)\\
0 & \cdots & 0 & P^*(\nu-1) & \Lambda^*_{\nu-1}
\end{pmatrix}
\end{gather}
and
\begin{gather}\label{Bns_finite_mat}
K_{\nu,2}^*=
\begin{pmatrix}
-P^*(0)+\Lambda^*_0 & P(-1) & 0 & \cdots & 0 \\
P^*(1) & \Lambda^*_1 & P^*(-2) & \cdots & 0 \\
\vdots & \ddots & \ddots & \ddots & \vdots\\
0 & \cdots & P^*(\nu-2) & \Lambda^*_{\nu-1} & P^*(-\nu+1)\\
0 & \cdots & 0 & P^*(\nu-1) & \Lambda^*_{\nu-1}
\end{pmatrix}
\end{gather}
are related to sequences with $A_{2\nu-1}, B_{2\nu-1}\neq 0$. Indeed, such solutions of the recurrence relations \eqref{Ansrecurrence}--\eqref{Bnsrecurrence} exist only if the matrices $K_{\nu,1}^*$, $K_{\nu,2}^*$ are singular.

Letting
\begin{gather*}
\delta_{\nu,1}^*(\lambda)=\det K_{\nu,1}^*(\lambda),\qquad \delta_{\nu,2}^*(\lambda)=\det K_{\nu,2}^*(\lambda),
\end{gather*}
we introduce the sets
\begin{gather*}
S_{\nu,1}^*=\{\lambda \colon \delta_{\nu,1}^*(\lambda)=0\},\qquad S_{\nu,2}^*=\{\lambda \colon \delta_{\nu,2}^*(\lambda)=0\}.
\end{gather*}
Just as in the periodic case, we see that each of the two sets $S_{\nu,j}^*$ consists of $\nu$ real and distinct eigenvalues. Moreover, we infer from \cite[Theorem~7.4]{MW} that $S^*_\nu:=S^*_{\nu,1}\cup S^*_{\nu,2}$ contains the simple part of the antiperiodic spectrum.

\begin{Lemma}\label{Lemma:antperSpec}
Let $m=2\nu$, $\nu\in{\mathbb{Z}}_{\geq 0}$. If $\lambda$ belongs to the antiperiodic spectrum of the Lam\'e operator~\eqref{jacLamOp} and has multiplicity $1$, then $\lambda\in S^*_\nu$.
\end{Lemma}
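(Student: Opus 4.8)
The plan is to establish Lemma~\ref{Lemma:antperSpec} as the antiperiodic counterpart of Lemma~\ref{Lemma:perSpec}, by tracing a simple antiperiodic eigenvalue through the transformation to Ince's equation and exploiting the reducibility produced by $P^*(-\nu)=0$. First I would take a value $\lambda$ in the antiperiodic spectrum of multiplicity one. By definition this means there is, up to a scalar, a unique $2K$-antiperiodic solution $\psi$ of \eqref{jacLamEq}. Passing to the variable $\phi=\operatorname{am}x$ and writing $\psi(x)=y(\operatorname{am}x)$ turns $\psi$ into a $\pi$-antiperiodic solution $y$ of Ince's equation \eqref{inceeq} with parameters \eqref{InceLamParam}. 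Since the coefficients of \eqref{inceeq} are even in $\phi$, the reflection $\phi\mapsto-\phi$ maps solutions to solutions; were the unique eigenfunction neither even nor odd, its even and odd parts would furnish two independent antiperiodic solutions, contradicting simplicity. Hence $y$ is purely even or purely odd and admits one of the Fourier expansions \eqref{2pievenoddsols}, whose coefficients obey \eqref{Ansrecurrence} or \eqref{Bnsrecurrence} respectively.

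The heart of the argument is the reducibility observation already recorded above: for $m=2\nu$ one has $P^*(-\nu)=0$, so the coupling of $A_{2\nu+1}$ (resp.\ $B_{2\nu+1}$) into the equation indexed by $n=\nu-1$ disappears. Consequently the first $\nu$ equations in \eqref{Ansrecurrence} close up among the coefficients $A_1,\dots,A_{2\nu-1}$ alone, i.e., the vector $\mathbf a=(A_1,\dots,A_{2\nu-1})^{\mathsf T}$ satisfies $K^*_{\nu,1}\mathbf a=0$; the odd case gives $K^*_{\nu,2}\mathbf b=0$ in the same way. I would emphasise that, in contrast to the periodic case, this does \emph{not} terminate the series: the remaining relations still couple $A_{2\nu-1}$ to the infinite tail through the nonzero coefficient $P^*(\nu)$, which reflects the fact that the antiperiodic Lam\'e functions carry a factor $\operatorname{dn}(x,k)=\sqrt{1-k^2\sin^2\phi}$ and hence are genuine infinite trigonometric series in $\phi$. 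Nevertheless, provided the leading block $\mathbf a$ (resp.\ $\mathbf b$) is nonzero, singularity of $K^*_{\nu,1}$ (resp.\ $K^*_{\nu,2}$) follows at once, giving $\delta^*_{\nu,1}(\lambda)=0$ or $\delta^*_{\nu,2}(\lambda)=0$, that is $\lambda\in S^*_\nu$.

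The one step that needs care, and which I expect to be the main obstacle, is excluding the degenerate possibility that the leading block vanishes identically, $A_1=\dots=A_{2\nu-1}=0$, so that the eigenfunction is supported entirely on the tail. This is precisely the point covered by the specialisation of \cite[Theorem~7.4]{MW}: the eigenvalues produced by the finite blocks $K^*_{\nu,j}$ are simple, whereas every antiperiodic eigenvalue not accounted for by these blocks is a coexistence value and therefore of multiplicity two. Thus a multiplicity-one eigenvalue cannot arise from a tail-only solution, forcing $\mathbf a\neq 0$ (resp.\ $\mathbf b\neq 0$) and completing the inclusion $\lambda\in S^*_\nu$. As a consistency check one can compare cardinalities: $S^*_\nu=S^*_{\nu,1}\cup S^*_{\nu,2}$ contributes $\nu+\nu=2\nu$ real simple eigenvalues, exactly matching the $2\nu$ antiperiodic band edges bounding the odd-numbered gaps among the first $m=2\nu$, so the inclusion is in fact an equality.
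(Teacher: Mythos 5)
Your proposal is correct and takes essentially the same route as the paper: the reduction to Ince's equation, the reducibility of the recurrence relations \eqref{Ansrecurrence}--\eqref{Bnsrecurrence} caused by $P^*(-\nu)=0$ for $m=2\nu$, and the resulting finite homogeneous systems with matrices $K^*_{\nu,1}$, $K^*_{\nu,2}$ are exactly the paper's setup preceding the lemma. The one step you delegate to \cite[Theorem~7.4]{MW} --- that an eigenfunction whose leading block of Fourier coefficients vanishes (a ``tail-only'' solution) forces coexistence and hence multiplicity two --- is precisely the content the paper itself extracts from that same theorem, so the two arguments coincide in substance.
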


Since the spectrum of the Lam\'e operator~\eqref{jacLamOp} has precisely $m=2\nu$ gaps, the ends of the gaps must be given by the $m$ largest eigenvalues in $S_\nu$ and all $m$ eigenvalues in~$S^*_{\nu}$. (Recall that $(-\infty,\lambda_0)$ is not counted as a gap.)

We are now ready to complete the proof of Theorem~\ref{lameThm} for even $m$. From \eqref{P}--\eqref{Lambda} and \eqref{An_finite_mat}--\eqref{Bn_finite_mat} it easily follows that
\begin{gather*}
\delta_{\nu,j}(k)=\prod_{n=j-1}^\nu \big(\lambda-(2n)^2\big)+\mathcal{O}\big(k^2\big),\qquad k\to 0,
\end{gather*}
where $j=1,2$. Similarly, \eqref{Ps}--\eqref{Lambdas} and \eqref{Ans_finite_mat}--\eqref{Bns_finite_mat} readily yield
\begin{gather*}
\delta^*_{\nu,j}(k)=\prod_{n=0}^{\nu-1} \big(\lambda-(2n+1)^2\big)+\mathcal{O}\big(k^2\big),\qquad k\to 0.
\end{gather*}
This implies that the $m$ gaps close at the points $\lambda=j^2$, $j=1,\ldots,m$, as $k\to 0$. On the other hand, taking $k\to 0$ in \eqref{jacLamEq}, we obtain the free Schr\"odinger equation $-\psi^{\prime\prime}=\lambda\psi$, and (excluding the constant solution $\psi=1$ with $\lambda=0$) the $m$ smallest values of $\lambda$ for which it has $\pi$-periodic or $\pi$-antiperiodic solutions are precisely $\lambda=j^2$, $j=1,\ldots,m$. Using that the eigenvalues $\lambda=\lambda(k)\in S_\nu\cup S^*_\nu$ have multiplicity one, a standard argument shows that they depend analytically on $k$, see e.g.~Chapter XII in \cite{ReedSimon2}. Hence there exists $k_1\in(0,1)$ such that all points of coexistence are indeed contained in the inf\/inite spectral band for $k\in(0,k_1)$.

There remains to show that neither of the following two scenarios can occur for $k\geq k_1$: f\/irst, a~point of coexistence crosses one or more open gaps; and, second, an open gap closes while a~point of coexistence turns into an open gap. The f\/irst scenario is immediately ruled out by the interlacing property~\eqref{interlace}, and the second by the analyticity of the eigenvalues.

This concludes the proof in the case of even $m$. It is straightforward to adapt the arguments above to handle the case of odd $m$. In particular, the roles of the periodic and antiperiodic solutions will then be interchanged, and analogues of Lemmas~\ref{Lemma:perSpec}--\ref{Lemma:antperSpec} can once more be obtained from \cite[Theorems~7.3--7.4]{MW}.

\begin{Example}
Consider the simplest nontrivial even case $m=2$, which corresponds to the Lam\'e operator
\begin{gather}\label{jacm2LamOp}
L_2=-\frac{{\rm d}^2}{{\rm d}x^2}+6k^2\operatorname{sn}^2(x,k).
\end{gather}
The edges of its two spectral gaps $(\lambda^\prime_1,\lambda^\prime_2)$ and $(\lambda_1,\lambda_2)$, which correspond to the f\/irst two eigenvalues $\lambda_1^\prime$, $\lambda^\prime_2$ from its antiperiodic spectrum and the second and third eigenvalues~$\lambda_1$,~$\lambda_2$ from its periodic spectrum, are easily computed explicitly.

Indeed, observing that
\begin{gather*}
\delta^*_{1,1}=P^*(0)+\Lambda^*_0,\qquad \delta^*_{1,2}=-P^*(0)+\Lambda^*_0,
\end{gather*}
we f\/ind that the edges of the f\/irst gap are given by
\begin{gather}\label{eigenVal1}
\lambda^\prime_1=1+k^2,\qquad \lambda^\prime_2=1+4k^2,
\end{gather}
cf.~\eqref{Ps}--\eqref{Lambdas}. To determine the edges of the second gap, we should f\/ind the roots $\lambda$ of the polynomials
\begin{gather*}
\delta_{1,1}=
\begin{vmatrix}
 \Lambda_0& P_{-1}\\
 2P_0 & \Lambda_1\\
 \end{vmatrix},\qquad
\delta_{1,2}=\Lambda_1.
\end{gather*}
Recalling \eqref{P}--\eqref{Lambda}, we deduce that the two roots of the former polynomial are
\begin{gather}\label{eigenVal2}
\lambda_0=2\big(1+k^2-\sqrt{1-k^2+k^4}\big),\qquad \lambda_2=2\big(1+k^2+\sqrt{1-k^2+k^4}\big),
\end{gather}
whereas the latter has the single root
\begin{gather}\label{eigenVal3}
\lambda_1=4+k^2.
\end{gather}

The relevant eigenvalues are plotted together with the ground state eigenvalue from the periodic spectrum in Fig.~\ref{Fig:m=2EigenVals}.

In the limit $k\to 1$ the real period goes to inf\/inity and the Lam\'e potential
\begin{gather*}
6k^2\operatorname{sn}^2(x,k)\to 6-6\operatorname{sech}^2 x.
\end{gather*}
The potential $u(x)=-6\operatorname{sech}^2 x$ decays exponentially at inf\/inity and is a special example of a~$2$-soliton potential, see, e.g.,~\cite{DJ}. The corresponding operator
\begin{gather*}
H_2=-\frac{{\rm d}^2}{{\rm d}x^2}-6\operatorname{sech}^2 x
\end{gather*}
is known to be ref\/lectionless. Its spectrum has continuous part $[0,\infty)$ and discrete part consisting of the two eigenvalues $-1$ and $-4$. Note that for $k\to 1$ the formulae \eqref{eigenVal1}--\eqref{eigenVal3} show that the two f\/inite spectral bands degenerate to the eigenvalues $\lambda_0=\lambda^\prime_1=2$ and $\lambda^\prime_2=\lambda_1=5$, while $\lambda_2\to 6$, in agreement with Fig.~\ref{Fig:m=2EigenVals}.

\begin{figure}[t]\centering
\includegraphics[width=0.75\textwidth]{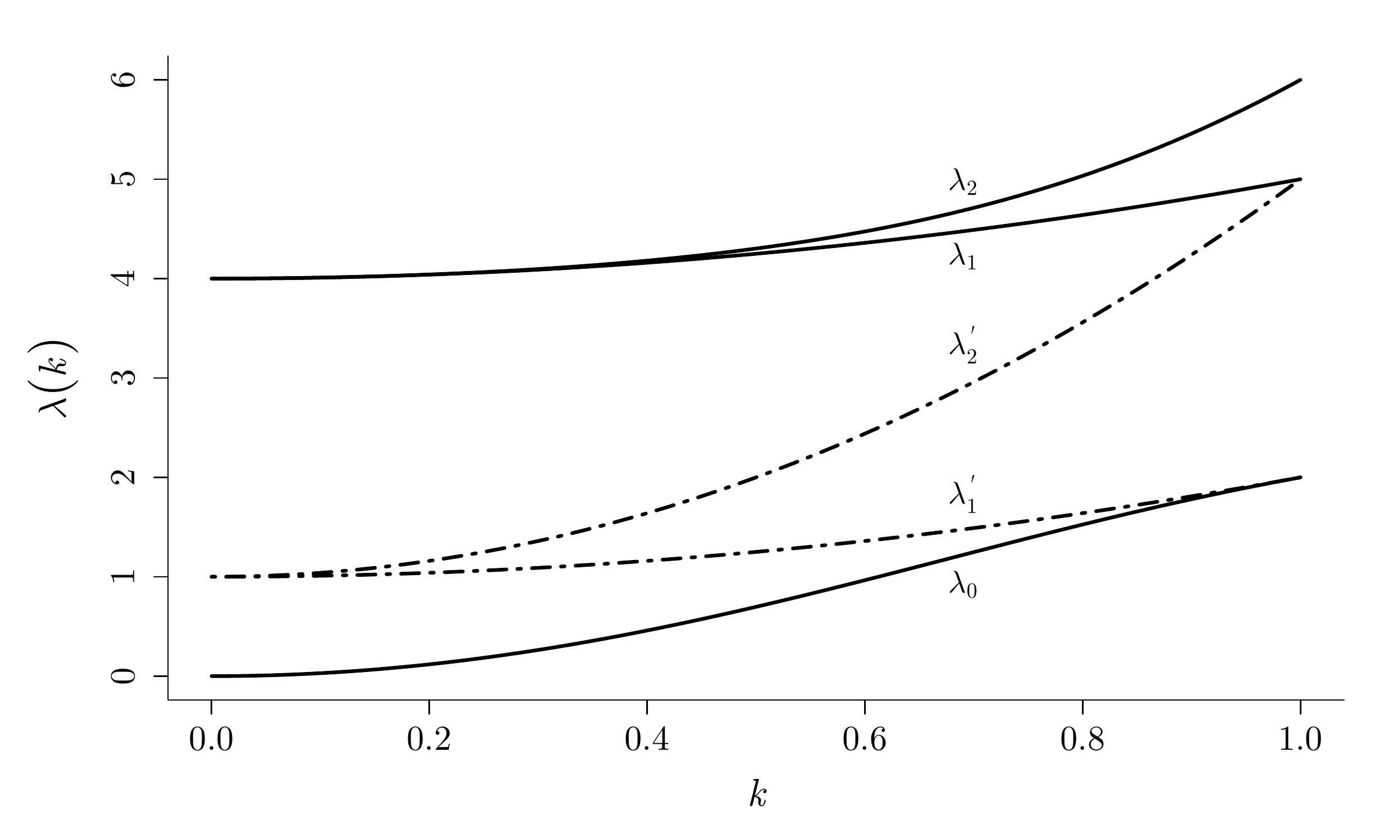}
\caption{First f\/ive eigenvalues from the periodic and antiperiodic spectra of the Lam\'e operator \eqref{jacm2LamOp}.}\label{Fig:m=2EigenVals}
\end{figure}
\end{Example}

\section{The spectrum of the complex Lam\'e operator}\label{Sec:cplxLame}
Let $\mathcal{E}={\mathbb{C}}/\mathcal{L}$ be a general elliptic curve, where $\mathcal{L}$ is a period lattice. To begin with, we do not impose any reality conditions. Let $\wp(z)$ be the corresponding Weierstrass' elliptic function,
\begin{gather*}
\wp(z+\Omega)=\wp(z),\qquad \Omega\in\mathcal{L},
\end{gather*}
which satisf\/ies the equation
\begin{gather*}
(\wp^\prime)^2=4(\wp-e_1)(\wp-e_2)(\wp-e_3),
\end{gather*}
see, e.g.,~\cite{WW}. It has poles of second order at all lattice points, which are the only singularities of~$\wp(z)$.

Let $\omega$ be a half-period: $2\omega \in \mathcal L$, and consider the {\it complex Lam\'e operator} $L=L(\mathcal{E},\omega,m,z_0)$ in $L^2({\mathbb{R}})$ given by
\begin{gather}\label{cplxLameOp}
L=-\frac{{\rm d}^2}{{\rm d}x^2}+m(m+1)\omega^2\wp(\omega x+z_0),
\end{gather}
with $m\in\mathbb N$ and $z_0\in\mathbb C$ chosen such that the line $z=\omega x+z_0$, $x\in{\mathbb{R}}$, does not contain any points of $\mathcal{L}$. Note that the potential $m(m+1)\omega^2\wp(\omega x+z_0)$ is regular and periodic with period~$2$, but is in general complex-valued.

From the work of Rofe-Beketov \cite{RB} on the spectral theory of non-self-adjoint dif\/ferential operators in $L^2({\mathbb{R}})$ with complex-valued periodic coef\/f\/icients, it follows that the spectrum of $L$ can be characterised as the set of $\lambda \in \mathbb C$ such that the associated Lam\'e equation
\begin{gather}\label{realLameEq}
-\frac{{\rm d}^2\phi}{{\rm d}x^2}+m(m+1)\omega^2\wp(\omega x+z_0)\phi=\lambda\phi,\qquad x\in{\mathbb{R}},
\end{gather}
has a non-zero bounded solution. Equivalently, the corresponding Floquet multiplier $\mu(\lambda)$, def\/ined by
\begin{gather*}
\phi(x+2,\lambda)=\mu(\lambda)\phi(x,\lambda),
\end{gather*}
must have absolute value $|\mu(\lambda)|=1$.

\begin{Proposition}\label{Prop:cplxLame}
The spectrum of the complex Lam\'e operator \eqref{cplxLameOp} does not depend on the value of $z_0$ and is determined by the condition~\eqref{spect} below.
\end{Proposition}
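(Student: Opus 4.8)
The plan is to work entirely with the Floquet (monodromy) description of the spectrum supplied by Rofe--Beketov and to exhibit the relevant multiplier as a global invariant of the Lam\'e equation that cannot see the shift $z_0$. First I would substitute $z=\omega x+z_0$ in \eqref{realLameEq}; since $\mathrm d/\mathrm dx=\omega\,\mathrm d/\mathrm dz$, the eigenvalue equation becomes the standard Lam\'e equation
\[
-\Psi''(z)+m(m+1)\wp(z)\Psi(z)=E\Psi(z),\qquad E=\lambda/\omega^2,
\]
whose potential $m(m+1)\wp(z)$ is elliptic and, in particular, $2\omega$-periodic because $2\omega\in\mathcal L$. A solution $\phi$ of \eqref{realLameEq} is simply $\phi(x)=\Psi(\omega x+z_0)$, and the translation $x\mapsto x+2$ corresponds to $z\mapsto z+2\omega$.

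The key step is to realize the Floquet multiplier as an eigenvalue of a monodromy matrix that is manifestly $z_0$-free. The solutions $\Psi$ of the $z$-equation are holomorphic away from $\mathcal L$ and form a two-dimensional space on which the translation $\Psi(\cdot)\mapsto\Psi(\cdot+2\omega)$ acts linearly, precisely because $\wp(z+2\omega)=\wp(z)$. Fixing one global basis $(\Psi_1,\Psi_2)$, this map is represented by a matrix $M=M(\lambda)$ depending only on $\lambda$ (through $E$) and on the data $(\mathcal E,\omega)$, never on where one starts along any line. Since $\phi(x+2)=\Psi(\omega x+z_0+2\omega)=(M\Psi)(\omega x+z_0)$, the Floquet multiplier $\mu(\lambda)$ of \eqref{cplxLameOp} is an eigenvalue of this single global matrix, hence independent of $z_0$. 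This yields the first assertion and also explains why the geometrically distinct parallel lines arising for different $z_0$ produce the same spectrum.

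To extract the explicit condition I would use that \eqref{realLameEq} has no first-order term, so the Wronskian is constant and $\det M(\lambda)=1$; the two multipliers then satisfy $\mu^2-\Delta(\lambda)\mu+1=0$ with $\Delta(\lambda)=\operatorname{tr}M(\lambda)$, an entire function of $\lambda$ by analytic dependence of solutions on the spectral parameter. Since $\mu_1\mu_2=1$, the condition $|\mu_1|=1$ forces $\mu_2=\bar\mu_1$ and hence $\Delta(\lambda)=\mu_1+\mu_2=2\operatorname{Re}\mu_1\in[-2,2]$; conversely $\Delta(\lambda)\in[-2,2]$ gives $|\mu_{1,2}|=1$. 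Thus $|\mu(\lambda)|=1$ is equivalent to $\Delta(\lambda)$ being real with $|\Delta(\lambda)|\le2$, a $z_0$-free criterion, which is the content of \eqref{spect}. For $m=1$ this becomes fully explicit via Hermite's solution \eqref{eigenfunction}: there $\mu=\exp(2\eta k-2\zeta(k)\omega)$ with $\lambda=-\omega^2\wp(k)$, so that $|\mu|=1$ reduces to $\operatorname{Re}[\eta k-\zeta(k)\omega]=0$.

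The main obstacle I anticipate is not the $z_0$-independence, which is essentially formal once $M(\lambda)$ is introduced, but the careful translation of ``a bounded solution exists on $\mathbb R$'' into $|\mu(\lambda)|=1$ when $M(\lambda)$ fails to be diagonalizable, i.e.\ at the boundary values $\Delta(\lambda)=\pm2$, where a Jordan block can produce a linearly growing solution coexisting with a genuinely bounded (anti)periodic one. This degenerate case, together with the boundedness of the periodic Floquet factor needed to deduce boundedness of $\phi$ from $|\mu|=1$ in the diagonalizable case, is exactly what the Rofe--Beketov theory \cite{RB} settles, so I would invoke it rather than redo the estimates; the remaining work is the routine check that $E=\lambda/\omega^2$ and $\det M=1$ give the stated quadratic for $\mu$.
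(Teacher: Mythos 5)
The core gap is in the second half of the statement. In the paper, condition \eqref{spect} is not the abstract discriminant criterion you derive, but the explicit transcendental condition $\operatorname{Re}[f_m(\lambda)]=0$ with $f_m(\lambda)=\zeta(\omega)\sum_{j=1}^m k_j-\omega\sum_{j=1}^m\zeta(k_j)$, where the parameters $k_j(\lambda)$ come from Hermite's construction for general $m$: the product of two solutions of \eqref{cplxLameEq} satisfies a third-order ODE whose relevant solution is a monic degree-$m$ polynomial in $\wp(z)-e_2$, factorised as $\prod_{j=1}^m(\wp(z)-\wp(k_j))$, with the signs of the $k_j$ fixed through a Wronskian normalisation; this produces the explicit Floquet solutions \eqref{gen_m_eigen} and the multiplier $\mu={\rm e}^{2f_m}$, from which both claims of the proposition are read off at once. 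Your criterion $\Delta(\lambda)=\operatorname{tr}M(\lambda)\in[-2,2]$ is a correct characterisation of the spectrum, but it is not \eqref{spect}, and you only recover the explicit condition in the case $m=1$ by quoting \eqref{eigenfunction}. Since the proposition (and its use throughout Section~\ref{Sec:cplxLame}, where the Morse-theoretic analysis is applied to $u(k)=\operatorname{Re}[f(k)]$) is precisely about having this explicit formula for all $m$, the substantive part of the paper's proof — Hermite's construction for arbitrary $m$ — is absent from yours.

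There is also a gap in your $z_0$-independence argument, which is otherwise a clean alternative to the paper's. It rests on fixing ``one global basis'' $(\Psi_1,\Psi_2)$, i.e., on the solution space of the $z$-equation being a well-defined two-dimensional space on all of ${\mathbb{C}}\setminus\mathcal{L}$. That requires solutions to be single-valued around the lattice points, which are regular singular points of the equation; this holds precisely because the coupling is $m(m+1)$ (indicial exponents $-m$ and $m+1$, and no logarithmic terms — a classical but non-trivial fact needing a parity argument or the explicit solutions themselves). For a generic coefficient $A\neq m(m+1)$ solutions are multivalued, no global basis exists, and the admissible values of $z_0$ fall into disconnected strips between which a local conjugation argument for the monodromy cannot pass — so the step you call ``essentially formal'' is exactly where the special structure of the Lam\'e potential enters. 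The paper avoids the issue because Hermite's solutions, built from $\sigma$-functions, are manifestly single-valued, and $z_0$-independence is immediate from the resulting formula $\mu={\rm e}^{2f_m}$.
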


\begin{proof}
We will use the classical results about the Lam\'e equation
\begin{gather}\label{cplxLameEq}
	-\frac{{\rm d}^2\psi}{{\rm d}z^2} + m(m+1)\wp(z)\psi=\lambda\psi
\end{gather}
in the complex domain $z\in{\mathbb{C}}$. Its solutions were described by Hermite \cite{Hermite} in the following way, see, e.g., \cite[Section 23.7]{WW}. First, one notes that the product $X$ of any pair of solutions of \eqref{cplxLameEq} satisf\/ies the third order equation
\begin{gather*}
-\frac{{\rm d}^3 X}{{\rm d}z^3}+4\big(m(m+1)\wp(z)-\lambda\big)\frac{{\rm d} X}{{\rm d}z}+2m(m+1)\wp^\prime(z)=0.
\end{gather*}
Changing variable to
\begin{gather*}
\xi=\wp(z)
\end{gather*}
yields the algebraic form of this equation:
\begin{gather*}
-4(\xi-e_1)(\xi-e_2)(\xi-e_3)\frac{{\rm d}^3X}{{\rm d}\xi^3}-3\big(6\xi^2-g_2/2\big)\frac{{\rm d}^2X}{{\rm d}\xi^2}\\
\qquad{} +4((m^2+m-3)\xi-\lambda)\frac{{\rm d}X}{{\rm d}\xi}-2m(m+1)X=0.
\end{gather*}
Making a power series ansatz in $\xi-e_2$ (say), it is straightforward to verify that the Lam\'e equation~\eqref{cplxLameEq} has two solutions~$\psi^\pm$ whose product~$X$ is a polynomial in $\xi-e_2=\wp(z)-e_2$ of the form
\begin{gather*}
X(\xi)=(\xi-e_2)^m+\sum_{r=1}^m c_r(\xi-e_2)^{m-r},
\end{gather*}
with some coef\/f\/icients $c_r\in\mathbb C$. This polynomial can be written in the factorised form
\begin{gather*}
X(\xi)=\prod_{j=1}^m(\xi-\wp(k_j)),
\end{gather*}
where the $k_j=k_j(\lambda)\in{\mathbb{C}}$ are determined by this relation up to permutation and up to a sign. Assume now that $\psi^\pm$ are linearly independent, so that their Wronskian
\begin{gather}\label{WronEq}
W\big(\psi^+,\psi^-\big)\equiv C\neq 0.
\end{gather}
(Otherwise $\psi^\pm$ are one of the $2m+1$ Lam\'e functions.) The sign of $k_j$ can then be f\/ixed by requiring
\begin{gather*}
\frac{{\rm d}X}{{\rm d}\xi}\Bigg\arrowvert_{\xi=\wp(k_j)}=\frac{C}{\wp^\prime(k_j)}.
\end{gather*}
Finally, by solving \eqref{WronEq} and ${\rm d}(\psi^+\psi^-)/{\rm d}z={\rm d}X/{\rm d}z$ for ${\rm d}\log\psi^\pm/{\rm d}z$ and integrating, one f\/inds that~\eqref{cplxLameEq} is satisf\/ied by the two functions
\begin{gather}\label{gen_m_eigen}
\psi^\pm=\prod_{j=1}^m\left(\frac{\sigma(z\pm k_j)}{\sigma(z)\sigma(k_j)}\right){\rm e}^{\mp\sum\limits_{j=1}^m\zeta(k_j)z}.
\end{gather}

Restricting \eqref{gen_m_eigen} to the line $z=\omega x+z_0$ and using the quasi-periodicity of the $\sigma$- and $\zeta$-function we see that \eqref{realLameEq} has the Floquet solutions with Floquet multiplier $\mu = {\rm e}^{2f_m}$,
\begin{gather*}%\label{f}
f_m(\lambda)=\zeta(\omega)\sum_{j=1}^m k_j - \omega\sum_{j=1}^m\zeta(k_j).
\end{gather*}
Since $f_m$ is manifestly independent of $z_0$, it follows that the spectrum of the complex Lam\'e operator \eqref{cplxLameOp}, determined by the condition
\begin{gather}\label{spect}
 \operatorname{Re} [f_m(\lambda)]=0,
\end{gather}
is indeed independent of the value of $z_0$.
\end{proof}

Unfortunately, the condition \eqref{spect} is very dif\/f\/icult to analyse in general, so we will now focus on the simplest non-trivial case $m=1$. Hermite's solutions of (\ref{cplxLameEq}) have the form (\ref{eigenfunction}):
\begin{gather*}
\psi(z,k)=\frac{\sigma(z+k)}{\sigma(z)\sigma(k)}\exp(-\zeta(k)z),
\end{gather*}
with parameter $k$ related to $\lambda$ by $\lambda=-\wp(k)$. Restriction to the line $z=\omega x+z_0$ gives the solutions to \eqref{realLameEq} with
\begin{gather*}
\lambda=-\omega^2 \wp(k)
\end{gather*}
and Floquet multiplier given by
\begin{gather*}
f(k)=\eta k-\omega\zeta(k).
\end{gather*}
In particular, the corresponding solutions are bounded for $x\in{\mathbb{R}}$ if and only if
\begin{gather}\label{uzero}
u(k):= \operatorname{Re}[f(k)]=0.
\end{gather}

To study the solutions of this transcendental equation we will use elementary Morse theory arguments, see, e.g.,~\cite{Mil}.

\begin{Lemma}\label{Lemma:Morse}
Assuming that
\begin{gather}\label{nonDeg}
\eta+\omega e_j\neq 0,\qquad j=1,2,3,
\end{gather}
$u(k)$ is a Morse function on $\mathcal{E}^\times$, with critical points $\pm k^*$ given by
\begin{gather*}%\label{ksEq}
\eta + \omega\wp(k^*)=0.
\end{gather*}
Moreover, the index of $u(k)$ at $k=\pm k^*$ is equal to one.
\end{Lemma}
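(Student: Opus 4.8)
The plan is to exploit that $u$ is locally the real part of the holomorphic function $f(k)=\eta k-\omega\zeta(k)$. The function $f$ itself is only defined up to the additive period it acquires around one cycle of $\mathcal{E}$, but by the Legendre relation that period is purely imaginary, so $u=\operatorname{Re} f$ is single-valued and real-analytic on $\mathcal{E}^\times$; the origin is excised because $\zeta$ has a pole there. First I would record the elementary consequence of the Cauchy--Riemann equations: for holomorphic $f=u+\mathrm{i}v$ one has $\partial_x u=\operatorname{Re}(f')$ and $\partial_y u=-\operatorname{Im}(f')$, so $\nabla u$ vanishes precisely when the complex derivative $f'$ does. Using $\zeta'(k)=-\wp(k)$ gives
\[
f'(k)=\eta+\omega\wp(k),
\]
so the critical points of $u$ are exactly the solutions of $\eta+\omega\wp(k)=0$.

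Second I would count these critical points. Since $\wp\colon\mathcal{E}\to\mathbb{P}^1$ has degree two and $\wp$ is even, the equation $\wp(k)=-\eta/\omega$ has the two solutions $\pm k^*$, which coincide only when $k^*$ is a two-torsion point, i.e.\ when $\wp(k^*)=e_j$ for some $j$; this is ruled out precisely by the hypothesis $\eta+\omega e_j\neq 0$. As $\wp$ has a pole at the origin, $f'$ does not vanish near the puncture, so $\pm k^*$ are all the critical points of $u$ on $\mathcal{E}^\times$.

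Third comes non-degeneracy. Because $u$ is harmonic, its Hessian at a critical point is controlled by $f''$: from the expansion $f(k)=f(k^*)+\frac{1}{2}f''(k^*)(k-k^*)^2+\cdots$ the quadratic part $\operatorname{Re}\big[\frac{1}{2}f''(k^*)(k-k^*)^2\big]$ is a non-degenerate quadratic form exactly when $f''(k^*)\neq 0$. Here
\[
f''(k)=\omega\wp'(k),
\]
which vanishes iff $k^*$ is a half-period, i.e.\ iff $\wp(k^*)=e_j$ --- again excluded by the non-degeneracy condition. Thus the single hypothesis $\eta+\omega e_j\neq 0$ simultaneously separates $\pm k^*$ and forces $f''(k^*)\neq 0$, so $u$ is a Morse function with critical points $\pm k^*$ as claimed.

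Finally, the index. Writing $f''(k^*)=A\neq 0$ and $k-k^*=s+\mathrm{i}t$, the quadratic form $\operatorname{Re}\big[\frac{1}{2}A(k-k^*)^2\big]=\frac{1}{2}\big(\operatorname{Re}A\,(s^2-t^2)-2\operatorname{Im}A\,st\big)$ has Hessian determinant $-\frac{1}{4}|A|^2<0$, hence signature $(1,1)$; equivalently, a harmonic function admits no local extrema by the maximum principle, so every non-degenerate critical point is a saddle. Either way the Morse index at each of $\pm k^*$ equals one. I expect no genuine obstacle; the only point needing care is the clean algebraic observation that the one condition $\eta+\omega e_j\neq 0$ governs both the separation of the critical points and the non-vanishing of $f''$, which is exactly what makes the statement go through.
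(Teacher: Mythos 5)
Your proposal is correct and follows essentially the same route as the paper's proof: well-definedness of $u$ on $\mathcal{E}^\times$ via the Legendre relation, identification of the critical points with the zeros of $f'(k)=\eta+\omega\wp(k)$ through the Cauchy--Riemann equations, non-degeneracy from $f''(k)=\omega\wp'(k)\neq 0$ under the hypothesis \eqref{nonDeg}, and index one from the saddle structure of the harmonic function $u$. Your additional observation that the same hypothesis also prevents $\pm k^*$ from coinciding at a half-period is a nice point the paper leaves implicit, and the factor-of-$\tfrac14$ in your Hessian determinant is an immaterial convention choice.
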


\begin{proof}
Given that $\omega$ is a half-period, there exist $\nu_1,\nu_3\in{\mathbb{Z}}$ such that
\begin{gather*}
\omega = \nu_1\omega_1 + \nu_3\omega_3.
\end{gather*}
By the quasi-periodicity of $\zeta(z)$, we have
\begin{gather*}
u(k+2\omega_j) - f(k) = \nu_l\operatorname{Re}[\eta_l\omega_j-\eta_j\omega_l],\qquad l\neq j.
\end{gather*}
Hence the Legendre relation $\eta_1\omega_3-\eta_3\omega_1=i\pi/2$ ensures that the right-hand side equals zero, so that $u(k)$ is a well-def\/ined function on $\mathcal{E}^\times$.

Writing $k=s+it$, the critical points of $u(k)$ are the solutions of $u_s=u_t=0$, which are equivalent to
\begin{gather*}
\frac{{\rm d}f(k)}{{\rm d}k}=\frac{{\rm d}(\eta k-\omega\zeta(k))}{{\rm d}k} = \eta + \omega\wp(k) = 0.
\end{gather*}
Since $\wp(k)$ is an even function of order two, this equation has precisely two solutions $k=\pm k^*$. Letting
\begin{gather*}
v(k)=\operatorname{Im}[f(k)],
\end{gather*}
we infer from the Cauchy--Riemann equations that the Hessian matrix
\begin{gather*}
H(u) =
\begin{bmatrix}
u_{ss} & v_{tt}\\
v_{tt} & -u_{ss}
\end{bmatrix},
\end{gather*}
which has eigenvalues
\begin{gather*}
\lambda_\pm=\pm\sqrt{u_{ss}^2+v_{tt}^2}.
\end{gather*}
We recall that $u(k)$ is a Morse function if $H(u)$ is non-singular at all of its critical points, with the index being equal to the number of negative eigenvalues. Clearly, $H(u)$ is singular if and only if $u_{ss}=v_{tt}=0$ or, equivalently,
\begin{gather*}
\frac{{\rm d}^2f(k)}{{\rm d}k^2}=\frac{{\rm d}^2(\eta k-\omega\zeta(k))}{{\rm d}k^2} = \omega\wp^\prime(k) = 0.
\end{gather*}
Recalling that the only zeros of $\wp^\prime(k)$ on $\mathcal{E}^\times$ are $k=\omega_j$, $j=1,2,3$, we see that $H(u)$ is singular only if $\eta+\omega e_j=0$ for some $j=1,2,3$, which is excluded by the assumption~\eqref{nonDeg}.
\end{proof}

Using Morse theory, we can thus determine the topological nature of the zero set \eqref{uzero}. We assume that~\eqref{nonDeg} holds true, so that the critical points of $u(k)$ are non-degenerate, and that the level set $u(k)=0$ is non-singular in the sense that
\begin{gather}\label{nonSing}
u(k^*)\neq 0.
\end{gather}
We note that the lemniscatic case yields an example of a singular level set and the rhombic cases contain singular examples as well as a degenerate level set, see Figs.~\ref{Fig:lemniSpec} and~\ref{Fig:rhombSpec}.

\begin{Proposition}\label{Prop:kSpec}
Under our non-degeneracy and non-singularity assumptions \eqref{nonDeg} and \eqref{nonSing}, the closure $\bar{Z}_u\subset\mathcal{E}$ of the zero set
\begin{gather*}
Z_u=\big\{k\in\mathcal{E}^\times \colon u(k)=0\big\}
\end{gather*}
consists of two simple closed curves in $\mathcal{E}$ representing the same $($non-trivial$)$ homology class.
\end{Proposition}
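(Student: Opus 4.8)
The plan is to read off the global structure of the zero set from the Morse data of Lemma~\ref{Lemma:Morse} together with the behaviour of $u$ at the puncture $k=0$ and the $\pm k$ symmetry. I will use repeatedly that $u=\operatorname{Re}f$ is harmonic on $\mathcal{E}^\times$, being locally the real part of the holomorphic function $f(k)=\eta k-\omega\zeta(k)$, and that $f$ is odd, $f(-k)=-f(k)$ (since $\zeta$ is), so that $u(-k)=-u(k)$. By Lemma~\ref{Lemma:Morse} the only critical points are the two index-one saddles $\pm k^*$; by oddness their critical values are $\pm c$ with $c=u(k^*)$, and $c\neq 0$ by the non-singularity assumption~\eqref{nonSing}. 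Hence $0$ is a regular value lying strictly between the two critical values, and $Z_u=u^{-1}(0)$ is a smooth $1$-manifold in $\mathcal{E}^\times$.

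First I would analyse the puncture. From $\zeta(k)=1/k+O(k^3)$ one gets $f(k)=-\omega/k+O(k)$, so $u$ has a dipole singularity at $k=0$. In a small disc $D_\varepsilon$ about $0$ the level set $\{u=t\}$ is therefore, for every regular value $t$, a single arc whose two ends approach $0$ tangentially to a fixed line; consequently $\bar{Z}_u$ is a disjoint union of smooth circles, exactly one of which, call it $\Gamma_0$, passes through $0$. Since $u$ is bounded on the compact set $\mathcal{E}\setminus D_\varepsilon$, for $|t|$ sufficiently large the level set $\{u=t\}$ lies inside $D_\varepsilon$ and is a single circle.

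Next I would count the components by level-set surgery. Let $N(t)$ denote the number of circles of $\overline{\{u=t\}}$. The previous step gives $N(t)=1$ for all large $t$, and, since there is no critical value in $(c,\infty)$, also $N(t)=1$ for $t$ just above $c$. As the level set is then connected, passing the single non-degenerate saddle at $t=c$ splits it (a $0$-surgery on one component), so $N(t)=2$ for $t$ just below $c$; there being no further critical value in $(0,c)$, we obtain $N(0)=2$. To justify the invariance of $N$ between critical values despite the non-compactness, I would transfer the argument to the compact surface-with-boundary $\mathcal{E}\setminus D_\varepsilon$, on whose boundary the relevant level sets meet $\partial D_\varepsilon$ transversally in the two points of the strand through the puncture; making this bookkeeping at the puncture precise is the main technical obstacle.

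Finally I would pin down the homology class. The relations $u(\omega_j)=0$, which hold because $\iota\colon k\mapsto -k$ fixes each half-period while $u\circ\iota=-u$, together with the regularity of the $\omega_j$ guaranteed by~\eqref{nonDeg}, place $\omega_1,\omega_2,\omega_3$ and $0$ on $\bar{Z}_u$. Any component carrying such a fixed point is $\iota$-invariant, and in a local coordinate where $\iota$ is $z\mapsto -z$ it acts on that component as a reflection, hence with exactly two fixed points; so each of the two circles carries exactly two of these four points, giving $\Gamma_0\supset\{0,\omega_a\}$ and $\Gamma_1\supset\{\omega_b,\omega_c\}$. Lifting to $\mathbb{C}$, an $\iota$-invariant circle through two distinct $2$-torsion points is easily seen to represent a primitive class ($2\omega_a$, respectively $2(\omega_c-\omega_b)$), so both curves are non-trivial; for $\Gamma_1$ this also follows from the maximum principle, since a contractible component would bound a disc on which the harmonic function $u$ would vanish identically. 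Two disjoint essential simple closed curves on a torus are parallel, whence $\Gamma_0$ and $\Gamma_1$ represent the same non-trivial homology class.
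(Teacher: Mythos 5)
Your proof is correct in substance, but it takes a genuinely different route from the paper's. The paper argues via \emph{sublevel} sets: using the same local model $f(k)\approx-\omega/k$ at the puncture that you use, it shows $\{u\leq c\}$ is a disc for $c$ sufficiently negative, then applies Milnor's Theorems~3.1 and~3.2 \cite{Mil} to conclude that $\mathcal{E}_0=\overline{\{u\leq 0\}}$ is a disc with a single $1$-handle attached, hence (by orientability) an annulus; both the count of boundary curves and their parallelism drop out of this one handle computation. You instead do Morse theory on the \emph{level} sets themselves, counting circles through saddle surgery, and prove the homology claim by a separate symmetry argument: oddness of $u$ places all four two-torsion points on $\bar{Z}_u$, the involution $k\mapsto -k$ distributes them two per circle, and lifting to $\mathbb{C}$ exhibits each class as twice a half-lattice vector not in $\mathcal{L}$, hence non-zero. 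A dividend of your route is that it identifies which fixed points lie on which curve ($0$ together with one $\omega_a$; the remaining two $\omega_j$ together), a fact the paper only derives afterwards, inside the proof of Proposition~\ref{Thm:cplxLameThm}. Your flagged bookkeeping gap at the puncture is real, and your excision fix would work; note, though, that the paper's own patch is slicker: $\langle\operatorname{grad}u,\operatorname{grad}u\rangle^{-1}\operatorname{grad}u$, which is $1/f^\prime(k)$ in the complex coordinate, extends smoothly by zero over $k=0$, so the gradient-flow diffeomorphisms exist without removing a disc.

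Two points in your write-up need tightening. First, the assertion that the saddle at $t=c$ \emph{splits} the connected level curve, rather than re-gluing it into a single circle, is not a formal property of a $0$-surgery on one component: it uses that level sets are co-oriented by $\operatorname{grad}u$ and that $\mathcal{E}$ is orientable, so the side on which $u>t$ lies is constant along each component, forcing the out-end of each local branch at the saddle to join to an in-end. (This genuinely can fail without orientability: on $\mathbb{RP}^2$ the standard Morse function with three critical points takes one circle to one circle across its saddle.) Second, in the homology step the classes are $2\tilde\omega_a$ and $2\big(\tilde\omega_c-\tilde\omega_b\big)$ only for chosen lifts, i.e., they are well defined modulo $2\mathcal{L}$; what the computation actually yields is non-vanishing (which is all you need), not the stated primitive representatives — primitivity, if wanted, comes from embeddedness, not from the formula. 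Relatedly, your maximum-principle alternative for $\Gamma_1$ is incomplete as stated: the disc bounded by $\Gamma_1$ could a priori contain the puncture, where $u$ is not harmonic; that case must be excluded separately, e.g., by noting such a disc would contain $\Gamma_0$, which you have already shown to be essential.
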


\begin{proof}
By our non-singularity assumption, we can f\/ix the sign of $k^*$ such that
\begin{gather*}
c^*:=u(k^*)>0.
\end{gather*}
For $c\in{\mathbb{R}}$, we let $\mathcal{E}_c=\overline{\mathcal{E}_c^\times}\subset\mathcal{E}$ be the closure of the set
\begin{gather*}
\mathcal{E}_c^\times:=\big\{k\in\mathcal{E}^\times \colon -\infty<u(k)\leq c\big\}
\end{gather*}
(where, as we shall see below, taking the closure amounts to adding the point $k=0$).

First, we prove that each $\mathcal{E}_c$ with $c<-c^*$ is dif\/feomorphic to a disc. Since the meromorphic function $f(k)=\eta k-\omega\zeta(k)$ is odd and has a simple pole at $k=0$ with residue $-\omega$, we can f\/ind neighbourhoods $U,V\ni 0$ and a biholomorphic mapping $g\colon U\to V$ such that
\begin{enumerate}\itemsep=0pt
\item[1)] $g(0)=0$ and $g^\prime(0)=1$,
\item[2)] $g(-k)=-g(k)$, $k\in U$,
\item[3)] $f\big(g^{-1}(z)\big) = -\omega/z$, $z\in V$.
\end{enumerate}
Letting
\begin{gather*}%\label{omegaz}
\omega=\alpha+i\beta,\qquad z=x+iy,
\end{gather*}
the set $\operatorname{Re}[-\omega/z]\leq c$, $z\neq 0$, is given by
\begin{gather}\label{disc}
\left(x+\frac{\alpha}{2c}\right)^2 + \left(y+\frac{\beta}{2c}\right)^2\leq \frac{\alpha^2+\beta^2}{4c^2},
\end{gather}
where $(x,y)\neq (0,0)$. By choosing $c^\prime<-c^*$ suf\/f\/iciently small, we can ensure that the disc~\eqref{disc} is contained within $V$ and therefore that it is dif\/feomorphic to $\mathcal{E}_{c^\prime}$. For $c^\prime<c<-c^*$, we infer from \cite[Theorem~3.1]{Mil} that $\mathcal{E}_c$ is dif\/feomorphic to $\mathcal{E}_{c^\prime}$, and the assertion follows. (To be precise, the theorem does not apply as it stands, since $u(k)$ is not def\/ined at $k=0$. However, $\langle \operatorname{grad} u,\operatorname{grad} u\rangle ^{-1}\operatorname{grad} u$ extends to a smooth vector f\/ield on a neighbourhood of $\mathcal{E}_c$, $c<-c^*$, and the construction of the dif\/feomorphisms $\varphi_t\colon \mathcal{E}\to\mathcal{E}$ is readily adapted to the present case.)

Letting $0<\epsilon<c^*$, the region $u^{-1}([-c^*-\epsilon,-c^*+\epsilon])$ contains no critical point of $u$ other than $-k^*$. Since the index of $u(k)$ at $k=-k^*$ is one, it follows that $\mathcal{E}_{-c^*+\epsilon}$ is dif\/feomorphic to a~disc with a thickened $1$-cell attached, cf.~\cite[Theorem~3.2]{Mil}. Moreover, increasing~$c$ from $-c^*+\epsilon$ to $0$ does not alter the dif\/feomorphism type of~$\mathcal{E}_c$. Considering the boundary of~$\mathcal{E}_0$, we thus arrive at the statement.
\end{proof}

This leads us to the following result, which can also be extracted from \cite[Theorem 1.1 and Example C.1]{BG}.

\begin{Proposition}\label{Thm:cplxLameThm}
Under our assumptions of non-degeneracy \eqref{nonDeg} and non-singularity \eqref{nonSing}, the spectrum of the complex Lam\'e opera\-tor~\eqref{cplxLameOp} with $m=1$ consists of two regular analytic arcs. Moreover, precisely one arc extends to infinity and the remaining endpoints are $-\omega^2e_j$, $j=1,2,3$.
\end{Proposition}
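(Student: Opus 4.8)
The plan is to realise the spectrum as the image of the zero set under the double cover $\lambda=-\omega^2\wp(k)$ and to read off its structure from Proposition~\ref{Prop:kSpec}. By the Rofe--Beketov characterisation and the $m=1$ computation preceding Lemma~\ref{Lemma:Morse}, the spectrum is exactly $\Sigma=-\omega^2\wp(Z_u)$, so everything reduces to understanding how $\wp$ acts on the two simple closed curves $C_0,C_1$ comprising $\bar Z_u$. The key structural fact is that $\wp\colon\mathcal E\to\mathbb P^1$ is the quotient by the elliptic involution $\iota\colon k\mapsto -k$, branched precisely over the four two-torsion points $0,\omega_1,\omega_2,\omega_3$, with $\wp^{-1}(\infty)=\{0\}$.

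First I would record that $f(k)=\eta k-\omega\zeta(k)$ is odd, whence $u=\operatorname{Re}f$ satisfies $u(-k)=-u(k)$; in particular $u$ vanishes at all four two-torsion points and $\bar Z_u$ is $\iota$-invariant. Since $0$ lies on exactly one curve, say $C_0$, and $\iota(0)=0$, the involution fixes $C_0$ and hence maps $C_1$ to itself as well. Restricting $\iota$ to $C_0$ yields a smooth involution of a circle possessing a fixed point, which must be of reflection type and so has exactly two fixed points; thus $C_0$ carries $0$ together with exactly one $\omega_{j_0}$. The remaining two branch points $\omega_{j_1},\omega_{j_2}$, which also lie on $\bar Z_u$, are therefore on $C_1$, and the same reflection argument shows they are its only fixed points.

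Next I would pass to the quotient. Each $\wp|_{C_i}$ factors through $C_i/\iota$, an interval whose endpoints are the two branch points on $C_i$, and the induced map is injective since $\wp(k)=\wp(k')$ forces $k'=\pm k$; hence $\wp(C_i)$ is a simple arc with endpoints the images of those branch points. Thus $-\omega^2\wp(C_0)$ runs from $-\omega^2 e_{j_0}$ out to $\wp(0)=\infty$, while $-\omega^2\wp(C_1)$ is a bounded arc joining $-\omega^2 e_{j_1}$ and $-\omega^2 e_{j_2}$. Because $\wp^{-1}(\infty)=\{0\}\subset C_0$ alone, precisely one arc extends to infinity and the three finite endpoints are exactly $-\omega^2 e_j$, $j=1,2,3$. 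The two arcs are disjoint: a common value would give $k_1=\pm k_0$ with $k_0\in C_0$ and $k_1\in C_1$, contradicting the disjointness and $\iota$-invariance of $C_0,C_1$.

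The hard part will be the analyticity and regularity of the arcs, especially at the folding points $-\omega^2 e_j$. Away from the branch points $u$ has no critical points on $Z_u$, since its critical values are $\pm c^*\neq 0$ by the non-singularity assumption~\eqref{nonSing}; hence $Z_u$ is there a regular real-analytic curve and $\wp$, a local biholomorphism, yields regular analytic arcs. At a branch point $\omega_j$ the non-degeneracy assumption~\eqref{nonDeg} guarantees $\omega_j\neq\pm k^*$, so $C_i$ is regular and real-analytic through $\omega_j$ and admits a parametrisation $w=k-\omega_j=\gamma(t)$ that is odd in $t$ with $\gamma'(0)\neq 0$. Since $\wp(\omega_j+w)=e_j+\frac{1}{2}\wp''(\omega_j)w^2+\cdots$ is even in $w$ with $\wp''(\omega_j)=2(e_j-e_l)(e_j-e_{l'})\neq 0$, the value $\lambda$ becomes an even analytic function of $t$, hence a regular analytic function of $s=t^2\geq 0$, exhibiting $-\omega^2 e_j$ as a genuine regular endpoint at which the two $\iota$-related halves of $C_i$ fold together. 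The same folding analysis, transported to $k=0$ through the biholomorphism $g$ of Proposition~\ref{Prop:kSpec} (in whose coordinate $u=\operatorname{Re}[-\omega/z]$ has a straight line through the origin as zero set), shows that $C_0$ is regular and real-analytic through $0$ and that the corresponding arc extends analytically to infinity.
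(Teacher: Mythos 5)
Your proposal is correct and follows essentially the same route as the paper: both realise the spectrum as the image of the two simple closed curves from Proposition~\ref{Prop:kSpec} under the quotient map $k\mapsto\lambda=-\omega^2\wp(k)$, using the $k\to-k$ invariance and the two-torsion points (the fixed points of the involution) to identify the endpoints $-\omega^2e_j$ and the single arc through $\infty$. The only difference is one of detail: you make explicit several steps the paper leaves implicit, namely the reflection-type fixed-point count on each curve, the injectivity of $\wp$ on each quotient circle, the disjointness of the two arcs, and the regularity of the arcs at the fold points and at infinity.
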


\begin{proof}
By Proposition \ref{Prop:kSpec}, there exist simple closed curves $\gamma_1$, $\gamma_2$ such that
\begin{gather*}
\bar{Z}_u=\gamma_1\cup\gamma_2.
\end{gather*}
For each $j=1,2,3$, we deduce from the Legendre relation that $\omega_j\in Z_u$. In addition, $\bar{Z}_u$ contains $k=0$ and is invariant under the ref\/lection $k\to -k$, which implies that each curve $\gamma_j$ contains precisely two f\/ixed points. Since the quotient map
\begin{gather*}
\mathcal{E}\to\mathcal{E}/(k\to -k)=\mathbb{CP}^1
\end{gather*}
is given by $k\mapsto\wp(k)$, the image of $\bar{Z}_u$ in $\mathbb{CP}^1$ under the map $k\mapsto\lambda=-\omega^2\wp(k)$ consists of two regular analytic arcs connecting two pairs of f\/ixed points.
\end{proof}

The next proposition, which can also be extracted from \cite{BG}, describes the asymptotic behaviour of the inf\/inite spectral arc.

\begin{Proposition}\label{Prop:asymp}
For $m=1$, the spectral arc of the complex Lam\'e operator \eqref{cplxLameOp} that extends to infinity is asymptotic to the positive real line shifted by $-2\eta\omega$.
\end{Proposition}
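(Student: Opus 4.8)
The plan is to use that, by Propositions~\ref{Prop:kSpec} and~\ref{Thm:cplxLameThm}, the spectrum is the image of the zero set $Z_u$ under the map $k\mapsto\lambda=-\omega^2\wp(k)$, together with the observation that the only point of $\mathcal{E}$ sent to $\lambda=\infty$ is the pole $k=0$ of $\wp$. Hence the single unbounded arc must correspond to the branch of $\bar{Z}_u$ through the origin, and its asymptotic behaviour can be read off directly from the Laurent expansions of $\wp$ and $\zeta$ at $k=0$.

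First I would record the standard expansions near $k=0$, namely $\wp(k)=k^{-2}+O(k^2)$ and $\zeta(k)=k^{-1}+O(k^3)$. These give $f(k)=\eta k-\omega\zeta(k)=-\omega/k+\eta k+O(k^3)$ and $\lambda=-\omega^2\wp(k)=-\omega^2/k^2+O(k^2)$. Squaring the expansion of $f$ yields
\[
 f(k)^2=\frac{\omega^2}{k^2}-2\eta\omega+O(k^2),
\]
so that eliminating the $k^{-2}$ term against the expansion of $\lambda$ produces the key identity
\[
 \lambda=-f(k)^2-2\eta\omega+O(k^2),\qquad k\to 0.
\]

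Next I would restrict to the spectrum. On $Z_u$ the defining condition~\eqref{uzero} gives $\operatorname{Re} f(k)=0$, so $f(k)=iv(k)$ with $v=\operatorname{Im} f$ real, whence $-f(k)^2=v(k)^2\ge 0$ and the identity becomes $\lambda=v(k)^2-2\eta\omega+O(k^2)$. Along the infinite branch one has $k\to 0$, so $v(k)\to\pm\infty$ while the remainder tends to zero; consequently $\operatorname{Im}(\lambda+2\eta\omega)\to 0$ and $\operatorname{Re}(\lambda+2\eta\omega)\to+\infty$. This is exactly the assertion that the unbounded arc is asymptotic to the ray $\{t-2\eta\omega\colon t\ge 0\}$, that is, the positive real line shifted by $-2\eta\omega$.

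The one point that needs care — which I regard as the main, if minor, obstacle — is to confirm that the local picture at $k=0$ yields a single arc escaping to infinity rather than two. Since $f(k)\sim-\omega/k$, the condition $\operatorname{Re} f=0$ forces $k\to 0$ along the two antipodal directions $\arg k=\arg\omega\pm\pi/2$; these are the two ends of one smooth branch of $Z_u$ through the origin, and both are sent by $\lambda$ onto the same shifted ray, because $\lambda$ depends on $v^2$. This matches the count in Proposition~\ref{Thm:cplxLameThm} of exactly one infinite arc, and everything else reduces to a routine estimate of the Laurent remainder.
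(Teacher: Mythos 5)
Your proof is correct and takes essentially the same route as the paper: a local analysis at the pole $k=0$, where the spectral condition forces $f(k)$ to be purely imaginary, so that $\lambda$ equals a large positive real quantity plus the constant $-2\eta\omega$ plus a vanishing remainder. The only difference is mechanical: the paper reuses the exact linearizing coordinate $g$ from Proposition~\ref{Prop:kSpec} (with $f\big(g^{-1}(z)\big)=-\omega/z$) to parametrize the spectrum near $k=0$ as $z=i\omega s$ and expand $\lambda=1/s^2-2\eta\omega+\mathcal{O}\big(s^2\big)$, whereas you extract the same constant term by squaring $f$ and working directly with the Laurent expansions of $\wp$ and $\zeta$, which makes your argument self-contained but otherwise identical in substance.
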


\begin{proof}
Recalling from the proof of Proposition \ref{Prop:kSpec} the biholomorphic mapping $g\colon U\to V$, we observe that any $z\in V$ satisfying the spectral condition $u\big(g^{-1}(z)\big)=\operatorname{Re}[-\omega/z]=0$ is of the form
\begin{gather*}
z=i\omega s,\qquad s\neq 0.
\end{gather*}
By Properties (1)--(3) of $g$, we have
\begin{gather*}
g^{-1}(z)=z-\frac{\eta}{\omega}z^3+\mathcal{O}\big(z^5\big).
\end{gather*}
This implies
\begin{gather*}
\lambda = -\omega^2\wp\big(g^{-1}(i\omega s)\big)= \frac{1}{s^2} - 2\eta\omega + \mathcal{O}\big(s^2\big),\qquad s\to 0,
\end{gather*}
from which the assertion clearly follows.
\end{proof}

For the remainder of this section, we restrict attention to {\it real} elliptic curves $\mathcal{E}$ given by
\begin{gather*}
y^2=4x^3-g_2x-g_3,\qquad g_2,g_3\in\mathbb{R}.
\end{gather*}
There are two types of such curves, depending on the sign of the discriminant
\begin{gather*}
\Delta=g_2^3-27g_3^2.
\end{gather*}

If $\Delta>0$, the roots $e_1$, $e_2$, $e_3$ of the polynomial $p(x)=4x^3-g_2x-g_3$ are all real, the corresponding real curve consists of two ovals ($M$-curve) and the period lattice~$\mathcal{L}$ is rectangular.

If $\Delta<0$, we have one real root $e_1$ and a pair $e_2$, $e_3$ of complex conjugate roots, $e_3=\overline{e_2}$. The corresponding real curve consists of one oval and the period lattice~$\mathcal{L}$ is rhombic.

We f\/irst consider the rectangular case, with basic half-periods
\begin{gather}
\label{omega13Rect}
\omega_1\in (0,\infty),\qquad \omega_3\in i(0,\infty).
\end{gather}
The simplest instance of the corresponding Lam\'e operator \eqref{cplxLameOp} that is truly complex is obtained by choosing
\begin{gather*}
\omega=\omega_2=\omega_1+\omega_3.
\end{gather*}
For this choice of $\omega$, we have used the software {\it R} to numerically compute the spectrum of~\eqref{cplxLameOp} for $g_3=1$ and four dif\/ferent values of~$g_2$. The results are displayed in Fig.~\ref{Fig:rectSpec}.

\begin{figure}[t]\centering
\includegraphics[width=0.75\textwidth]{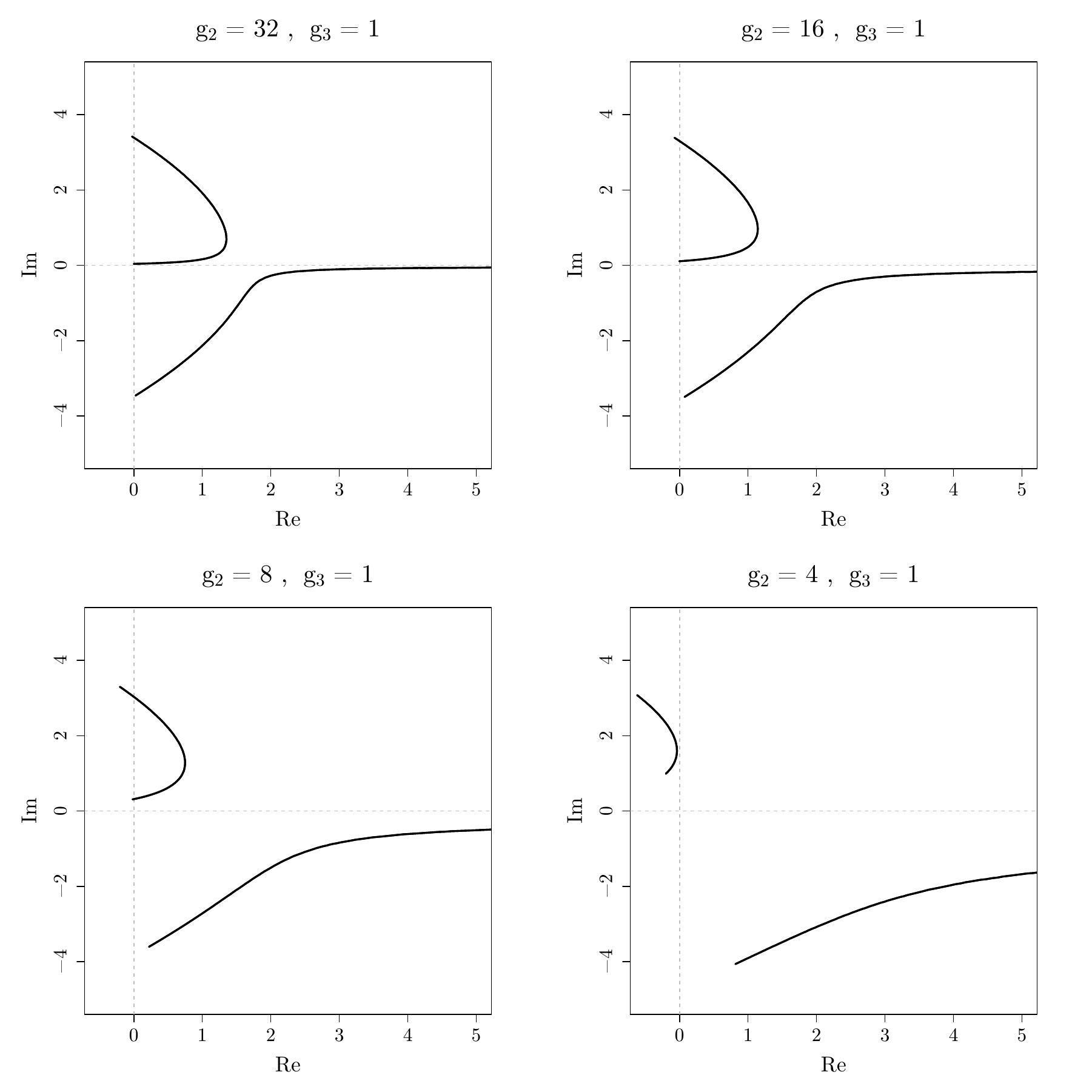}
\caption{Spectra of the complex Lam\'e operator \eqref{cplxLameOp} for rectangular period lattices ${\mathcal L}$, $m=1$ and $\omega=\omega_2$.}\label{Fig:rectSpec}
\end{figure}

We note that the (anti-)periodic solutions of \eqref{realLameEq} are given by $k\in Z_u$ satisfying
\begin{gather}\label{v}
v(k):=\operatorname{Im}[f(k)]=p\frac{\pi}{2},\qquad p\in\mathbb{Z}.
\end{gather}
By the Legendre relation, $v(\omega_3)=-v(\omega_1)=\pi/2$ and $v(\omega_2)=0$. Since $\lambda=-\omega^2\wp(\omega_j)$, $j=1,2,3$, at the endpoints of the spectral arcs, the remaining values of~$p$ must correspond to points contained in one of the two spectral arcs. Following standard terminology for the real Lam\'e operator, we call such a point in the spectrum a {\it closed spectral gap}. In analogy with Theorem~\ref{lameThm}, we have the following result.

\begin{Theorem}\label{Thm:rect}
For $m=1$, $\omega=\omega_2$ and $\omega_1$, $\omega_3$ of the form~\eqref{omega13Rect}, all closed spectral gaps of the complex Lam\'e operator~\eqref{cplxLameOp} are located on the spectral arc extending to infinity.
\end{Theorem}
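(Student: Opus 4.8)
The plan is to parametrise each spectral arc by the imaginary part $v=\operatorname{Im}f$ of the Floquet exponent $f(k)=\eta k-\omega\zeta(k)$ and to reduce the statement to locating the endpoints of the finite arc. First I would record that $v$ is a strictly monotone coordinate along $Z_u$: its differential cannot vanish because $f'(k)=\eta+\omega\wp(k)$ is zero only at $\pm k^*$, where $u(\pm k^*)=\pm c^*\neq 0$ by the non-singularity assumption \eqref{nonSing}, so $\pm k^*\notin Z_u$. By Proposition~\ref{Thm:cplxLameThm} the spectrum is the image of the two closed curves $\gamma_1$ (through $k=0$, infinite in $\lambda$) and $\gamma_2$ (finite), and the closed spectral gaps are exactly the interior points of the arcs at which $v\in\frac{\pi}{2}\mathbb{Z}$.

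Along $\gamma_1$ one has $f\sim-\omega/k$ as $k\to 0$, so $v\to\pm\infty$; monotonicity then shows that $v$ sweeps all of $\mathbb{R}$ on $\gamma_1$, and this arc already carries infinitely many closed gaps (all the ``high'' ones). Hence the theorem is equivalent to the assertion that the finite arc $\gamma_2$ carries no closed gap. Since $\gamma_2$ joins two of the half-periods, with $v(\omega_1)=-\frac{\pi}{2}$, $v(\omega_2)=0$, $v(\omega_3)=\frac{\pi}{2}$, and $v$ varies monotonically between them, this holds precisely when the two endpoints have consecutive values in $\frac{\pi}{2}\mathbb{Z}$, i.e.\ when $\gamma_2$ joins $\omega_2$ ($v=0$) to one of $\omega_1,\omega_3$ ($v=\pm\frac{\pi}{2}$), so that $v$ ranges over an interval of length $\frac{\pi}{2}$ with no interior multiple of $\frac{\pi}{2}$. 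Equivalently, I must show that $0$ and $\omega_2$ lie on \emph{different} components of $Z_u$.

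This component identification is the heart of the matter and the main obstacle. For $\omega=\omega_1$ or $\omega_3$ it is immediate: the Legendre relation makes $f$ purely imaginary on the imaginary, resp.\ real, axis, so that axis lies in $Z_u$ and exhibits the component through $0$ as joining $0$ to $\omega_3$, resp.\ $\omega_1$. For the genuinely complex case $\omega=\omega_2$ there is no anti-holomorphic symmetry fixing an invariant curve, and I would instead argue by continuity across the connected family of rectangular lattices (parametrised by the modulus $k\in(0,1)$), on which the two-arc structure is stable as long as \eqref{nonDeg} and \eqref{nonSing} hold. The computation is made tractable by the additive identity $f_{\omega_2}=f_{\omega_1}+f_{\omega_3}$, where $f_{\omega}(k)=\zeta(\omega)k-\omega\zeta(k)$: normalising $\omega_1=1$ and writing $\omega_3=i\beta$, the Legendre relation gives $\eta_2=\eta_1\omega_2-\frac{i\pi}{2}$, whence $f_{\omega_2}=\omega_2 f_{\omega_1}-\frac{i\pi}{2}k$ and $u_{\omega_2}=u_{\omega_1}-\beta\,v_{\omega_1}+\frac{\pi}{2}\operatorname{Im}k$. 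In the thin-lattice limit $k\to 0$ ($\beta\to\infty$) the middle term dominates, so $Z_{u_{\omega_2}}\to\{v_{\omega_1}=0\}$; since $f_{\omega_1}\to-\frac{\pi}{2}\cot\frac{\pi z}{2}$, which is real exactly on $\mathbb{R}$, the component through $0$ degenerates to the real axis, joining $0$ to $\omega_1$. This is the desired case, and by stability it persists on the subinterval $(0,1/\sqrt2)$. The symmetric limit $k\to 1$ (obtained from $k\to 0$ by interchanging $\omega_1$ and $\omega_3$ together with complex conjugation) gives the imaginary axis, joining $0$ to $\omega_3$; the two regimes meet at the lemniscatic lattice, which is exactly the singular level set noted in the text, so the endpoint may switch from $\omega_1$ to $\omega_3$ there, but in every non-singular rectangular case the finite arc joins $\omega_2$ to an antiperiodic half-period.

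With $\gamma_2$ identified as joining $\omega_2$ ($v=0$) to $\omega_1$ or $\omega_3$ ($v=\pm\frac{\pi}{2}$), monotonicity of $v$ gives a $v$-range of length $\frac{\pi}{2}$ with no interior point of $\frac{\pi}{2}\mathbb{Z}$, so $\gamma_2$ carries no closed gap and every closed gap lies on the infinite arc, as claimed. The delicate points I would still need to nail down are: (i) that \eqref{nonDeg} and \eqref{nonSing} fail only at isolated lattices (the lemniscatic one among them), so that the continuity argument applies on each complementary subinterval and no ``bad'' subinterval is isolated from both degenerate ends; and (ii) the uniformity of the two-arc structure under the degeneration, i.e.\ that the second component (which escapes towards $i\infty$ as $k\to 0$) does not interfere with the component through $0$ in the limit.
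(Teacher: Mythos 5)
Your skeleton is built from correct ingredients, several of which coincide with the paper's own: strict monotonicity of $v=\operatorname{Im}f$ along $Z_u$ away from the critical points $\pm k^*$, the Legendre values $v(\omega_2)\equiv 0$, $v(\omega_1),v(\omega_3)\equiv\pm\pi/2\ (\mathrm{mod}\ \pi)$, the reduction of the theorem to the statement that $0$ and $\omega_2$ lie on different components of $Z_u$, and the identity $f_{\omega_2}=\omega_2 f_{\omega_1}-\tfrac{i\pi}{2}k$ with the trigonometric limit identifying the component through $0$ with a deformation of the real axis. The genuine gap is exactly your point (i), and it is not a detail that can be ``nailed down'': your continuity argument propagates the component pairing only across the maximal subintervals of the modulus interval adjacent to the two degenerate ends. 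To reach every rectangular lattice you need the lemniscatic lattice to be the \emph{only} one at which \eqref{nonDeg} or \eqref{nonSing} fails; otherwise there are middle subintervals, between consecutive singular lattices, that your argument cannot reach, and passing through an unanalysed singular lattice can genuinely flip the pairing to the bad one $\{0,\omega_2\},\{\omega_1,\omega_3\}$ (when a Morse level set crosses a critical value, the pairing switches between the two resolutions of the crossing, and which pairings these are depends on the crossing configuration, which you have not determined). The uniqueness you need is precisely what the Remark following Theorem~\ref{Thm:rect} states the authors believe but cannot prove; as written, your proof therefore rests on an open problem.

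The paper escapes this by anchoring the deformation at the singular case rather than at the degenerate ends: Proposition~\ref{Prop:lemn} analyses the lemniscatic lattice, and the key step is a local analysis at the crossing point $k^*$ (local model $f=f(k^*)+z^2$, monotonicity of $v$ on the four branches, and $0<v(k^*)<\pi/2$) showing that \emph{whenever} the two curves intersect, all solutions of $v\in\tfrac{\pi}{2}\mathbb{Z}$ lie on the branch running into $k=0$. Singular lattices thus become anchor points rather than obstacles, and continuity in $\tau$ is needed only on intervals bounded by such anchors, so no count or location of singular lattices is required. Two further points: your argument does not cover the lemniscatic lattice itself, although Theorem~\ref{Thm:rect} carries no non-singularity hypothesis and includes it; and ``consecutive values in $\tfrac{\pi}{2}\mathbb{Z}$'' glosses over the fact that $v$ is only well defined modulo $\pi$ on $\mathcal{E}^\times$, so a monotone arc from $\omega_2$ to $\omega_3$ could a priori overshoot by a multiple of $\pi$ and carry closed gaps; ruling this out requires the common homology class of the two components from Proposition~\ref{Prop:kSpec} together with its identification, e.g., in your thin-lattice limit.
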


Our proof of this result relies on a detailed analysis of the lemniscatic case. The corresponding set $\bar{Z}_u$ and spectrum of \eqref{cplxLameOp} are shown in Fig.~\ref{Fig:lemniSpec}. The curve $k=(1-i)\omega_1s$, $-1\leq s\leq 1$, and spectral arc $[0,\infty)$ are deduced analytically in the proof of Proposition~\ref{Prop:lemn} below, whereas the remaining curve and spectral arc were obtained numerically using the software~{\it R}.

\begin{figure}[t]\centering
\includegraphics[width=0.75\textwidth]{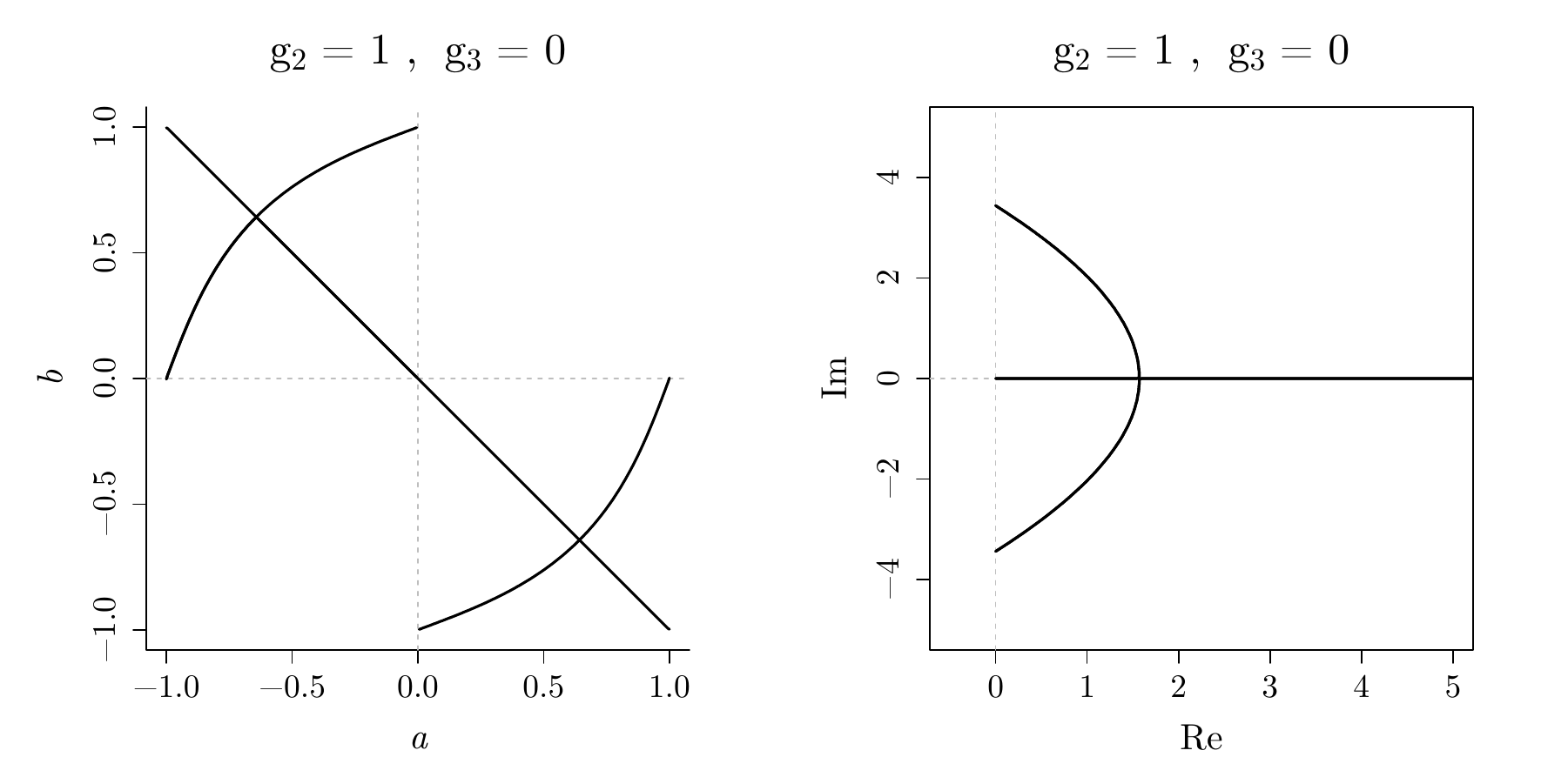}
\caption{The set $\bar{Z}_u$, where $k=a\omega_1+b\omega_3$, and the spectrum of the complex Lam\'e operator~\eqref{cplxLameOp} in the lemniscatic case with $m=1$ and~$\omega=\omega_2$.} \label{Fig:lemniSpec}
\end{figure}

\begin{Proposition}\label{Prop:lemn}
Fix $\omega_1\in (0,\infty)$ and let $\omega_3=i\omega_1$ and $\omega=(1+i)\omega_1$. Then the spectrum of the complex Lam\'e operator~\eqref{cplxLameOp} consists of two analytic arcs, intersecting at the interior point $\lambda^*=\eta_2\omega_2$. The spectrum is invariant under complex conjugation and the infinite spectral arc coincides with $[0,\infty)$. Moreover, the tangent lines to the finite spectral arc at its endpoints
\begin{gather}\label{endpts}
\lambda_\pm=\pm i\frac{\Gamma^4(1/4)}{16\pi}
\end{gather}
are
\begin{gather}\label{tngnts}
l_\pm(t)=\lambda_\pm+\left(\frac{\Gamma^4(1/4)}{8\pi^2}\pm i\right)^2t,\qquad t\in\mathbb{R}.
\end{gather}
\end{Proposition}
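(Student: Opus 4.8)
The plan is to exploit the extra symmetry of the square (lemniscatic) lattice. With $\omega_3=i\omega_1$ one has $g_3=0$, $e_2=0$, $e_3=-e_1$, together with the homogeneity relations $\wp(ik)=-\wp(k)$ and $\zeta(ik)=-i\zeta(k)$; combined with the Legendre relation these give $\eta_1\omega_1=\pi/4$, $\eta=\eta_2=(1-i)\eta_1$, $\omega=\omega_2=(1+i)\omega_1$, so that $\eta\omega=\eta_2\omega_2=\pi/2$ is real while $\omega^2=2i\omega_1^2$ is purely imaginary. First I would record these identities, since they drive everything else, and in particular $\bar\eta=i\eta$, $\bar\omega=-i\omega$.

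Next I would pin down the infinite arc. Along the diagonal $k=-i\omega s=(1-i)\omega_1 s$, $s\in[-1,1]$, the identities above give $f(-i\omega s)=-i\big(\eta\omega s+\omega\zeta(\omega s)\big)$, and since $\bar\omega=-i\omega$ yields $\overline{\omega\zeta(\omega s)}=\omega\zeta(\omega s)$, the bracket is real; hence $u\equiv 0$ on the whole segment and it lies in $Z_u$. There $\lambda=-\omega^2\wp(-i\omega s)=\omega^2\wp(\omega s)$ is real, vanishes at $s=\pm 1$ (where $k\equiv\omega_2$, so $\wp=e_2=0$) and blows up as $s\to 0$; a monotonicity check then identifies its image as exactly $[0,\infty)$. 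For the reflection symmetry of the spectrum I would verify the identity $f(i\bar k)=\overline{f(k)}$, immediate from $\bar\eta=i\eta$, $\bar\omega=-i\omega$ and $\zeta(i\bar k)=-i\zeta(\bar k)$, $\overline{\zeta(\bar k)}=\zeta(k)$. Thus $Z_u$ is invariant under $k\mapsto i\bar k$, and since this map sends $-\omega^2\wp(k)$ to $\bar\lambda$, the spectrum is invariant under complex conjugation.

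To obtain the interior intersection I would use Lemma~\ref{Lemma:Morse}: the critical points $\pm k^*$ satisfy $\wp(k^*)=-\eta/\omega$, whence the critical value is $\lambda^*=-\omega^2\wp(k^*)=\omega\eta=\eta_2\omega_2=\pi/2$. One checks that $-\eta/\omega$ is purely imaginary and agrees with $\wp$ along the diagonal at the point where $\lambda=\lambda^*$, so $\pm k^*$ lie on the segment and therefore $u(k^*)=0$. This is precisely the singular situation excluded by \eqref{nonSing}, so Proposition~\ref{Thm:cplxLameThm} is unavailable; this is the main obstacle. Instead I would redo the local Morse picture at the index-one critical points with critical value zero: there $Z_u$ has a transverse self-crossing, so a second branch passes through each $\pm k^*$. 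That branch also passes through $\omega_1,\omega_3$ (where $u=0$ by the Legendre relation) and, under the two-to-one quotient $k\mapsto\lambda$, projects to the finite analytic arc joining the endpoints $-\omega^2 e_1$ and $-\omega^2 e_3$ and meeting the infinite arc at the interior point $\lambda^*$.

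Finally I would compute the endpoints and tangents. The scale-invariant lemniscatic constant $2\omega_1^2 e_1=\Gamma^4(1/4)/(16\pi)$ (from $\omega_1=\frac12\int_{e_1}^\infty(t^3-t)^{-1/2}\,dt$ normalised to $e_1=1$) gives $\lambda_\pm=\pm i\,\Gamma^4(1/4)/(16\pi)$, as in \eqref{endpts}. Since $\wp'(\omega_j)=0$, the map $k\mapsto\lambda$ is critical at $\omega_1,\omega_3$; expanding $\lambda-\lambda_j=-\frac{\omega^2}{2}\wp''(\omega_j)(k-\omega_j)^2+\cdots$ with $\wp''(\omega_{1})=\wp''(\omega_3)=4e_1^2$ shows the arc tangent points in the direction of $-i\tau_j^2$, where $\tau_j\propto i\,\overline{\eta+\omega e_j}$ is the tangent to $Z_u$ at the (regular, by \eqref{nonDeg}) point $\omega_j$. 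Writing $p=\eta_1$, $q=\omega_1 e_1$ and using $q/p=4\omega_1^2 e_1/\pi=\Gamma^4(1/4)/(8\pi^2)$, these directions reduce to $\big(\Gamma^4(1/4)/(8\pi^2)\pm i\big)^2$, giving the tangent lines \eqref{tngnts}. The tangent computation is the most calculation-heavy step, but it becomes routine once the local expansion and the lemniscatic constant are in place.
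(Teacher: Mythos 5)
Your proposal is correct in substance and, for most of its length, runs parallel to the paper's proof: the same lemniscatic identities and special values, the same conjugation symmetry via $k\mapsto i\bar{k}$, the same diagonal segment $k=(1-i)\omega_1 s$ producing the infinite arc $[0,\infty)$ (you use a monotonicity check where the paper invokes Proposition~\ref{Prop:asymp}; both work), and essentially the same endpoint and tangent computations. The genuine difference is your treatment of the interior intersection. You correctly observe that $\pm k^*$ lie on the diagonal, so $u(k^*)=0$ and the non-singularity assumption \eqref{nonSing} fails, making Proposition~\ref{Thm:cplxLameThm} inapplicable; you then replace it by a local Morse-lemma analysis at the index-one critical points with critical value zero, which exhibits the transverse self-crossing of $Z_u$ at $\pm k^*$ and hence the interior intersection at $\lambda^*=\eta_2\omega_2$ analytically. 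The paper is less careful here: it cites Proposition~\ref{Thm:cplxLameThm} for the endpoints even though its hypotheses fail in this very case (a fact the paper itself notes when introducing \eqref{nonSing}), and the second curve of $\bar{Z}_u$ is explicitly stated, just before the proposition, to have been obtained \emph{numerically}. So on this point your route is the more self-contained one.

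Two remarks. First, the one step you assert without proof --- that the second branch through $\pm k^*$ is a single loop also passing through $\omega_1$ and $\omega_3$, so that its image is exactly one finite arc joining $\lambda_-$ to $\lambda_+$ through $\lambda^*$ --- is precisely the global statement the paper leaves to numerics; to close it rigorously one would combine the symmetries of $Z_u$ under $k\to-k$ and $k\to i\bar{k}$ with a Morse-theoretic or Euler-characteristic count, which neither you nor the paper carries out. Second, your tangent computation, if pushed to the end, actually pins down the pairing: at $\lambda_-=-\omega^2 e_1$ the direction is $\bigl(\Gamma^4(1/4)/(8\pi^2)+i\bigr)^2$ and at $\lambda_+=-\omega^2 e_3$ it is $\bigl(\Gamma^4(1/4)/(8\pi^2)-i\bigr)^2$; this agrees with the paper's general parameterisation $\lambda(t)=-\omega_2^2e_j+\bigl(3e_j^2-g_2/4\bigr)\bigl(\overline{\eta_2}\omega_2+\overline{e_j}|\omega_2|^2\bigr)^2t+\mathcal{O}\bigl(t^2\bigr)$ specialised to the lemniscatic case, but is opposite to the $\pm$ labelling printed in \eqref{tngnts}, which appears to be a typo in the statement. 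Since you only claimed the two lines as a set, your argument is unaffected, but stating the pairing explicitly would strengthen (and slightly correct) the result.
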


\begin{proof}
For the lemniscatic $\zeta$- and $\wp$-function, we recall the identities
\begin{gather}\label{ids}
\zeta(iz)=-i\zeta(z),\qquad \wp(iz)=-\wp(z),
\end{gather}
and special values
\begin{gather}
\label{vals}
\eta_1=i\eta_3=\frac{\pi}{4\omega_1},\qquad e_1=-e_3=\frac{\Gamma^4(1/4)}{32\pi\omega_1^2},\qquad e_2=0,
\end{gather}
see, e.g., \cite[Section~23.5(iii)]{DLMF}.

We note that the spectrum is given by
\begin{gather*}
\lambda=-\omega_2^2\wp(k)=-2\omega_1^2i\wp(k),\qquad k\in Z_u.
\end{gather*}
From \eqref{ids}--\eqref{vals}, we infer
\begin{gather*}
\bar{\lambda}=-2\omega_1^2i\wp(i\bar{k})
\end{gather*}
and
\begin{gather*}
u(k)=\operatorname{Re}\left[\frac{\pi}{4\omega_1}(1-i)k-(1+i)\omega_1\zeta(k)\right]=u(i\bar{k}),
\end{gather*}
which clearly implies invariance of the spectrum under complex conjugation. Using the f\/irst identity in \eqref{ids}, we f\/ind that $\overline{f((1-i)s)}=-f((1-i)s)$, $s\in{\mathbb{R}}$, which means that
\begin{gather*}
k=(1-i)\omega_1s\in \bar{Z}_u,\qquad -1\leq s\leq 1.
\end{gather*}
Since $e_2=0$ and $\bar{\lambda}=\lambda$ whenever $k=(1-i)\omega_1s$, $0<|s|\leq 1$, Proposition~\ref{Prop:asymp} ensures that $[0,\infty)$ coincides with the spectral arc extending to inf\/inity.

The expression \eqref{endpts} for the endpoints of the f\/inite spectral arc follows directly from~\eqref{vals} and Proposition~\ref{Thm:cplxLameThm}. Following the same line of reasoning as in the proof of Proposition~\ref{Prop:asymp}, and using standard power series expansions of $\zeta(z)$ and $\wp(z)$ around~$z=\omega_j$, we f\/ind parameterisations such that
\begin{gather*}%\label{specParam}
\lambda(t)=-\omega_2^2e_j+\big(3e_j^2-g_2/4\big)\big(\overline{\eta_2}\omega_2+\overline{e_j}|\omega_2|^2\big)^2t+\mathcal{O}\big(t^2\big),\qquad t\to+0.
\end{gather*}
(In fact, this requires no reality assumptions on $\mathcal L$ and any half-period $\omega$ and corresponding $\eta$ can be substituted for $\omega_2$ and $\eta_2$, respectively.) Specialising to the lemniscatic case, we readily deduce~\eqref{tngnts}.

The fact that the intersection point $\lambda^*=-\omega^2\wp(k^*)=\eta_2\omega_2$ is not an endpoint follows from explicit expressions for $\eta_2$, $\omega_2$ and $e_j$, see, e.g.,~\cite{DLMF}.
\end{proof}

We turn now to the proof of Theorem \ref{Thm:rect}. Since $\bar{Z}_u$ is invariant under the ref\/lection $k\to -k$, we may and shall restrict attention to that part of the fundamental period-paralellogram given by $\operatorname{Re} k\geq\operatorname{Im} k$. From \eqref{v} and the Legendre relation, we infer that
\begin{gather}\label{vValues}
v(\overline{\omega_2})=0,\qquad v(\omega_1)=v(-\omega_3)=\frac{\pi}{2}.
\end{gather}
Given that $k=\pm k^*$ are the only critical points of $f(k)$, the function $v(k)$ must be monotone on each component of~$Z_u$. Moreover, since $f^\prime(k^*)=0$ and $f^{\prime\prime}(k^*)\neq 0$, we can f\/ind neighbourhoods $U\ni k^*$, $V\ni 0$ and a biholomorphic mapping $g\colon U\to V$ such that
\begin{gather*}
f\big(g^{-1}(z)\big) = f(k^*)+z^2,\qquad z\in V,
\end{gather*}
which yields
\begin{gather*}
u\big(g^{-1}(z)\big)=x^2-y^2,\qquad v\big(g^{-1}(z)\big)=\operatorname{Im}[f(k^*)]+2xy,\qquad z=x+iy\in V.
\end{gather*}
Hence, in the neighbourhood $U$ of $k=k^*$, the zero set $\bar{Z}_u$ is given by $x=\pm y$. From these observations and \eqref{vValues}, we infer that $v(k)$ is strictly decreasing and increasing as $k\to k^*$ along the curve connecting $0$, $\bar{\omega_2}$ and $\omega_1$, $-\omega_3$, respectively, and that $0<v(k^*)<\pi/2$. In other words, all remaining solutions of~\eqref{v} are located on the component extending from $k=k^*$ towards $k=0$.

This result is readily generalised to all $\tau:=\omega_3/\omega_1\in i(0,\infty)$. Indeed, when we alter the value of $\tau$ from $\tau=i$ two things can happen: either the two curves remain intersecting or they break up into two disconnected curves. In the former case we can follow the same line of reasoning as in the lemniscatic case, and in the latter case we need only note that the closed spectral gaps cannot move from one curve to the other, since their locations depend continuously on~$\tau$. Applying the mapping $k\to\lambda=-\omega^2\wp(k)$, we thus conclude the proof of Theorem~\ref{Thm:rect}.

\begin{Remark}We believe that the intersection of the spectral arcs in the rectangular case with $\omega=\omega_2$ takes place only in the lemniscatic case, making it the only case in which the non-degeneracy conditions are violated, but we do not have a rigorous proof of this.
\end{Remark}

Consider now the case $\Delta<0$, corresponding to a rhombic period lattice~$\mathcal L$. We choose basic half-periods $\omega_1$, $\omega_3$ of the form
\begin{gather}\label{o13rhomb}
\omega_1\in (0,\infty),\qquad \operatorname{Re} \omega_3=\frac{1}{2}\omega_1,\qquad \operatorname{Im} \omega_3>0,
\end{gather}
so that $\overline{\omega_3}=\omega_1-\omega_3$.

Using the software {\it R}, we have in Fig.~\ref{Fig:rhombSpec} plotted the spectrum of the complex Lam\'e opera\-tor~\eqref{cplxLameOp} in four dif\/ferent rhombic cases, see Fig.~\ref{Fig:rhombkSpec} for the corresponding sets~$\bar{Z}_u$ and period lat\-ti\-ces~$\mathcal{L}$. These four cases exemplify the dif\/ferent types of spectra that occur for rhombic period lattices. In the lower left plot, we have the pseudo-lemniscatic case; and the upper left and lower right plots correspond to the two real forms of the complex equianharmonic curve.

\begin{figure}[t]\centering
\includegraphics[width=0.75\textwidth]{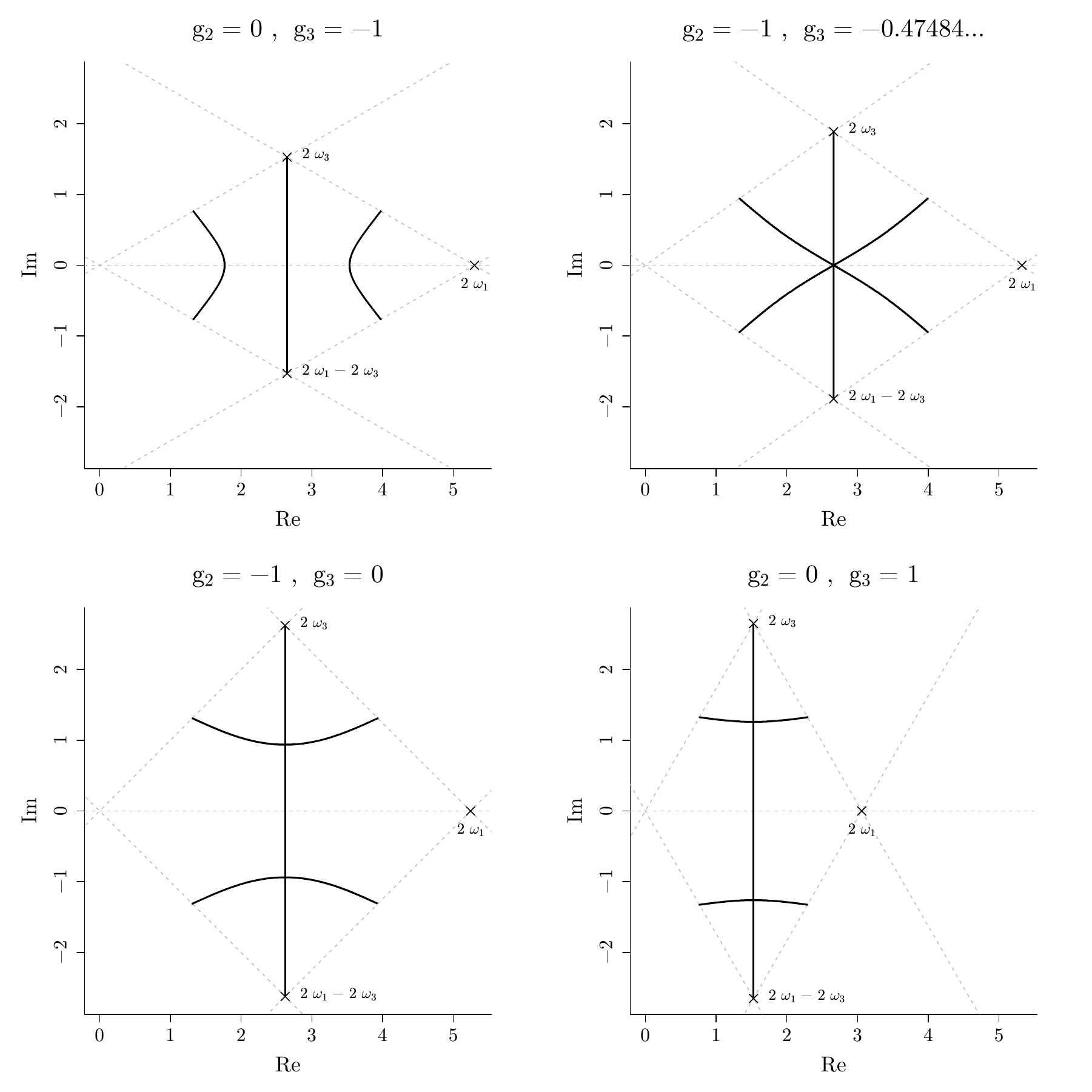}
\caption{The set $\bar{Z}_u$ for rhombic period lattices ${\mathcal L}$, $m=1$ and $\omega=\omega_1$.}\label{Fig:rhombkSpec}
\end{figure}

\begin{Proposition}
In the rhombic case \eqref{o13rhomb} the spectrum of the complex Lam\'e operator~\eqref{cplxLameOp} with $\omega=\omega_1$ is invariant under $\lambda\to\bar{\lambda}$, and the spectral arc extending to infinity coincides with $[-\omega_1^2e_1,\infty)$. Moreover, all closed spectral gaps are located on the spectral arc extending to infinity.
\end{Proposition}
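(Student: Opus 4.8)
The plan is to prove the three assertions in turn, following the pattern of Theorem~\ref{Thm:rect}.

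\emph{Reality.} The conditions~\eqref{o13rhomb} give $\overline{\omega_3}=\omega_1-\omega_3$, so $\overline{\mathcal L}=\mathcal L$ and hence $\overline{\wp(k)}=\wp(\bar k)$, $\overline{\zeta(k)}=\zeta(\bar k)$ and $\eta_1=\zeta(\omega_1)\in\mathbb R$. With $\omega=\omega_1$ this yields $\overline{f(k)}=f(\bar k)$, so $u(\bar k)=u(k)$ while $\lambda(\bar k)=-\omega_1^2\overline{\wp(k)}=\overline{\lambda(k)}$; thus $Z_u$ is invariant under $k\mapsto\bar k$ and the spectrum under $\lambda\mapsto\bar\lambda$.

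\emph{The infinite arc.} First I would show that the whole line $\operatorname{Re}k=\omega_1$ lies in $Z_u$. Writing $k=\omega_1+it$, the relation $2\omega_1\in\mathcal L$ gives $k+\bar k\in\mathcal L$, so $\wp(\bar k)=\wp(-k)=\wp(k)$ and $\wp(k)$, hence $\lambda=-\omega_1^2\wp(k)$, is real. Moreover $\tfrac{d}{dt}f(\omega_1+it)=i\big(\eta_1+\omega_1\wp(k)\big)$ is purely imaginary, so $u=\operatorname{Re}f$ is constant along the line; since $f(\omega_1)=\eta_1\omega_1-\omega_1\zeta(\omega_1)=0$, in fact $u\equiv0$ there. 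As $t$ runs from $0$ to $2\operatorname{Im}\omega_3$ the point $k$ runs from $\omega_1$ to $2\omega_3\equiv0$ and $\wp(k)$ decreases from $e_1$ to $-\infty$, so $\lambda$ sweeps out $[-\omega_1^2e_1,\infty)$; by Proposition~\ref{Prop:asymp} this is precisely the arc extending to infinity.

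\emph{Closed gaps.} As before, the closed gaps are the points $k\in Z_u$ with $v:=\operatorname{Im}f\in\tfrac{\pi}{2}\mathbb Z$ that are not band edges. Here $v(\omega_1)=0$ because $f(\omega_1)=0$, while the Legendre relation gives $f(\omega_2)=f(\omega_3)=\eta_1\omega_3-\omega_1\eta_3=i\pi/2$, so $v(\omega_2)=v(\omega_3)=\pi/2$. On the line $\operatorname{Re}k=\omega_1$ one has $v\to-\infty$ as $k\to0$, so the infinite arc already carries infinitely many closed gaps. For the finite arc (endpoints $-\omega_1^2e_2,-\omega_1^2e_3$ at $k=\omega_2,\omega_3$) the key point is that the two arcs \emph{intersect}: I claim $k^*$ lies on the line $\operatorname{Re}k=\omega_1$, i.e.\ $\wp(k^*)=-\eta_1/\omega_1<e_1$, equivalently
\[
\eta_1+\omega_1e_1>0.
\]
Granting this, $u(k^*)=0$ and $k^*$ is a saddle of the harmonic function $v$ sitting on $\operatorname{Re}k=\omega_1$; since that line is the direction in which $v$ attains a local \emph{maximum} at $k^*$ (there $\eta_1+\omega_1\wp$ changes sign), the transverse branch of $Z_u$ is the direction of a local \emph{minimum}, and the finite arc lifts to a path $\omega_2\to k^*\to\omega_3$ (together with its reflection $\omega_2\to-k^*\to\omega_3$) along which $v$ decreases to $v(k^*)$ and increases back to $\pi/2$. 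Hence $0<v(k^*)<v(\omega_2)=\pi/2$, so $v\in[v(k^*),\pi/2]$ on the finite arc and meets $\tfrac{\pi}{2}\mathbb Z$ only at the two band edges. Pushing forward by $k\mapsto\lambda=-\omega_1^2\wp(k)$ then shows that every closed gap lies on the infinite arc.

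\emph{Main obstacle.} The crux is the sign $\eta_1+\omega_1e_1>0$, equivalently that $k^*$ lies on the infinite-arc curve rather than on the real axis. This is genuinely a fact about the rhombic Weierstrass data and does \emph{not} follow from the (circular) convexity/monotonicity estimates for $f$ on $(0,\omega_1)$; I expect to establish it from explicit theta- or Eisenstein-series expansions at $\tau=\tfrac12+it$, in the spirit of the lemniscatic computation in Proposition~\ref{Prop:lemn}. Its necessity is clear: if instead $\eta_1+\omega_1e_1<0$, then $k^*$ would lie on the real axis, $f$ would have an interior real zero $s_0\in(0,\omega_1)$, and $\lambda(s_0)=-\omega_1^2\wp(s_0)<-\omega_1^2e_1$ would be a periodic point off the infinite arc, i.e.\ a closed gap violating the statement. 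The remaining bookkeeping (monotonicity of $v$ between the critical points $\pm k^*$ from Lemma~\ref{Lemma:Morse}, and the local normal form $f\circ g^{-1}=f(k^*)+z^2$ at the saddle) is exactly as in the proof of Theorem~\ref{Thm:rect}.
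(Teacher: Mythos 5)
Your first two steps are correct and are essentially the paper's own argument: conjugation invariance follows from $\overline{\mathcal L}=\mathcal L$, and your line $\operatorname{Re}k=\omega_1$ is precisely the paper's segment $k=(2\omega_3-\omega_1)s$, $-1\leq s\leq 1$, translated by the lattice vector $2\omega_3$, so your identification of the infinite arc with $[-\omega_1^2e_1,\infty)$ via Proposition~\ref{Prop:asymp} matches the paper's. For the closed gaps, however, you depart from the paper, which does not argue directly: it observes that the pseudo-lemniscatic spectrum coincides with the lemniscatic one and then deforms $\tau$, claiming that closed gaps vary continuously and therefore cannot move from one component of $\bar Z_u$ to the other.

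Your departure is where the proposal breaks down, and the gap is not repairable. Everything you do on the finite arc is conditional on the inequality $\eta_1+\omega_1e_1>0$, which you leave as the ``main obstacle'' to be settled by theta or Eisenstein expansions. No such computation can succeed, because the inequality fails on part of the family \eqref{o13rhomb}: by the paper's own Theorem~\ref{Thm:nondeg}, the quantity $\eta_1+\omega_1e_1$ is proportional to $-\Phi(t)$ (see \eqref{critCond2} and the lines following it), $\Phi$ is strictly decreasing along the rhombic family with a unique zero at $\mathcal E_*$, and the existence step there verifies $\Phi(-2)>0$, i.e.\ $\eta_1+\omega_1e_1<0$ for that curve; the entire range on one side of $\mathcal E_*$ (including one of the two real forms of the equianharmonic curve) violates your inequality. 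So your crux claim contradicts a theorem in the same paper. What your (correct) ``necessity'' computation then shows is more interesting than a mere gap: when $\eta_1+\omega_1e_1<0$, the real function $f(s)=\eta_1 s-\omega_1\zeta(s)$ satisfies $f(0^+)=-\infty$, $f(\omega_1)=0$ and $f'(\omega_1)=\eta_1+\omega_1e_1<0$, hence has a zero $s_0\in(0,\omega_1)$, and $\lambda(s_0)=-\omega_1^2\wp(s_0)<-\omega_1^2e_1$ is an interior point of the \emph{finite} arc carrying two independent periodic solutions $\psi^{\pm}$ (note $\wp(s_0)\neq e_j$, so this is a genuine coexistence point, i.e.\ a closed gap in the standard sense, and these are exactly the non-singular rhombic cases in the sense of \eqref{nonSing}). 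Thus your analysis, combined with Theorem~\ref{Thm:nondeg}, shows the third assertion of the proposition cannot hold unrestrictedly; it also pinpoints why the paper's continuity argument borrowed from Theorem~\ref{Thm:rect} is delicate here: at $\mathcal E_*$ the two components of $\bar Z_u$ touch precisely at $k^*=\omega_1$, which is itself a periodic point ($f(\omega_1)=0$), and a pair of periodic points does migrate through this junction from the infinite component to the finite one as the family crosses $\mathcal E_*$. Your scheme (and the statement) is salvageable only on the side $\eta_1+\omega_1e_1\geq 0$, where your Morse-theoretic bookkeeping of $v$ along the finite component goes through as in the lemniscatic case.
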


\begin{proof}
Recalling that the spectrum is given by
\begin{gather*}
\lambda=-\omega_1^2\wp(k),\qquad k\in Z_u,
\end{gather*}
invariance under complex conjugation follows immediately from
\begin{gather*}
\zeta(\bar{k})=\overline{\zeta(k)},\qquad \wp(\bar{k})=\overline{\wp(k)}.
\end{gather*}
Combining these identities with $2\omega_3-\omega_1\in i(0,\infty)$, we see that
\begin{gather*}
k=(2\omega_3-\omega_1)s\in\bar{Z}_u,\qquad -1\leq s\leq 1.
\end{gather*}
Since $\bar{\lambda}=\lambda$ for $k=(2\omega_3-\omega_1)s$, $0<s\leq 1$, Proposition~\ref{Prop:asymp} and $2\omega_3-\omega_1=\omega_1$ $(\operatorname{mod}~\mathcal{L})$ imply that $[-\omega_1^2e_1,\infty)$ coincides with the spectral arc extending to inf\/inity.

Observing that the spectrum in the pseudo-lemniscatic case $\omega_1\in(0,\infty)$ and $2\omega_3=(1+i)\omega_1$ (see the lower left plot in Fig.~\ref{Fig:rhombSpec}) is identical to that in the lemniscatic case, the proof of Theorem~\ref{Thm:rect} is readily adapted to the rhombic case.
\end{proof}

Now let us study the corresponding non-degeneracy conditions
\begin{gather*}
\eta_1+\omega_1 e_j\neq 0, \qquad j=1,2,3.
\end{gather*}

\begin{Theorem}\label{Thm:nondeg}
In the rhombic case with $m=1$ the non-degeneracy conditions are violated for exactly one exceptional curve $\mathcal E_*$ uniquely determined by the condition
\begin{gather}\label{critCond}
\eta_1+\omega_1 e_1=0.
\end{gather}
The corresponding spectrum has a tripod structure with three simple analytic arcs joined at $2\pi/3$ angles. The same curve is the bifurcation point for the non-singularity condition \eqref{nonSing}, separating the cases with intersecting and non-intersecting spectral arcs.
\end{Theorem}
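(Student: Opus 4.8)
The plan is to establish the three assertions in turn, starting with the existence and uniqueness of the exceptional curve. First I would observe that in the rhombic case only the $j=1$ condition can fail: since $\omega=\omega_1$, the quantities $\eta_1=\zeta(\omega_1)$ and $e_1$ are real, whereas $e_2=\overline{e_3}$ are genuinely non-real, so $\eta_1+\omega_1 e_{2,3}$ are nonzero complex conjugates. This singles out condition \eqref{critCond}. Next I would note that $\eta_1+\omega_1 e_1$ is homogeneous of degree $-1$ under rescaling of $\mathcal{L}$, so the condition is conformally invariant; fixing $\omega_1=1$ and writing $\omega_3=\tfrac12+i\beta$, $\beta\in(0,\infty)$, it depends only on $\beta$ (equivalently on the $j$-invariant). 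The task thus reduces to counting the zeros of the real-analytic function $G(\beta):=\eta_1+e_1$.

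For existence I would analyse the two degeneration limits. As $\beta\to\infty$ the lattice degenerates trigonometrically, giving $e_1\to\pi^2/6$ and $\eta_1=\tfrac{\pi^2}{12}E_2(\tau)\to\tfrac{\pi^2}{12}$, so $G\to\pi^2/4>0$. As $\beta\to0^+$ the lattice pinches: the point $\omega_1=1$ lies within distance $2\beta$ of the lattice point $2\omega_3$, so by periodicity $e_1=\wp(\omega_1)=\wp(2i\beta)\sim-1/(4\beta^2)\to-\infty$, which dominates the $O(\beta^{-1})$ growth of $\eta_1$ and forces $G\to-\infty$. Hence $G$ has at least one zero, located (consistently with $j_*>0$) at some $\beta_*\in(0,\sqrt3/2)$. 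For uniqueness I would show that every zero is a transversal \emph{upward} crossing, i.e.\ $G'(\beta)>0$ whenever $G(\beta)=0$; since $G(0^+)<0<G(+\infty)$, this excludes any downward crossing and forces exactly one zero. Computing $G'$ via Ramanujan's differential equations for $E_2,E_4$ together with the heat equation for the theta-constants in $e_1$, and verifying the sign of $G'$ on the exceptional locus, is the main obstacle. Alternatively one may use that $j$ is strictly monotone along the rhombic edge, reduce $G=0$ to a single equation in $j$, and locate $j_*\approx243.797$ numerically.

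For the tripod I would invoke Lemma~\ref{Lemma:Morse}: at $\mathcal{E}_*$ the critical point is $k^*=\omega_1$, where $f'(\omega_1)=\eta_1+\omega_1 e_1=0$ and $f''(\omega_1)=\omega_1\wp'(\omega_1)=0$, so the Hessian is singular and \eqref{nonDeg} fails. The cubic term survives, $f'''(\omega_1)=\omega_1\wp''(\omega_1)=2\omega_1(e_1-e_2)(e_1-e_3)=2\omega_1|e_1-e_2|^2\neq0$, and since $\wp$ is even about $\omega_1$ one gets $f(\omega_1+w)-f(\omega_1)=\tfrac16 f'''(\omega_1)w^3+O(w^5)$ with $w=k-\omega_1$. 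As $u(\omega_1)=0$, this yields $u\sim a\operatorname{Re}[w^3]$ with $a>0$ real, so $Z_u$ consists of three lines meeting at $\omega_1$ at $60^\circ$. Because $\omega_1$ is a branch point of $\wp$, the local map $k\mapsto\lambda=-\omega_1^2\wp(k)$ behaves like $\lambda-\lambda^*\sim-\omega_1^2 b\,w^2$ with $b>0$ real; squaring the arguments sends the three lines to three rays from $\lambda^*=-\omega_1^2 e_1$ separated by $2\pi/3$, giving the asserted tripod of three simple analytic arcs.

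Finally, for the bifurcation statement I would track the sign of $G$. By the Legendre relation $\omega_1\in Z_u$ (as used in Proposition~\ref{Thm:cplxLameThm}), so $u(\omega_1)=0$; hence at $\mathcal{E}_*$, where $k^*=\omega_1$, the non-singularity condition \eqref{nonSing} fails simultaneously with \eqref{nonDeg}. The sign of $G$ controls the position of the saddle: since $\wp(k^*)=-\eta_1/\omega_1$, one has $G<0$ iff $\wp(k^*)>e_1$, placing $k^*$ on the real axis, and $G>0$ iff $\wp(k^*)<e_1$, placing $k^*$ on the line $\operatorname{Re} k=\omega_1$; at $G=0$ the two symmetry lines meet and $k^*=\omega_1$. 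Using the reality identities $\wp(\bar k)=\overline{\wp(k)}$, $\zeta(\bar k)=\overline{\zeta(k)}$, the curves $\gamma_1,\gamma_2$ of Proposition~\ref{Prop:kSpec} are each invariant under $k\to-k$, so an interior intersection of the two spectral arcs forces $\gamma_1\cap\gamma_2\neq\emptyset$, i.e.\ a reconnection of $Z_u$ at the saddle. Adapting the sublevel-set analysis of Proposition~\ref{Prop:kSpec}, I would show that passage of $k^*$ through $\omega_1$ is precisely the reconnection that toggles between intersecting and non-intersecting arcs; as $G$ changes sign exactly once, $\mathcal{E}_*$ is the unique such bifurcation point.
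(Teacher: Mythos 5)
Your proposal gets several parts right, and in one place takes a genuinely different (and defensible) route. The reduction to the single condition \eqref{critCond} via reality of $\eta_1$, $\omega_1$ is the same as the paper's. Your treatment of the tripod is a self-contained alternative to the paper's appeal to Batchenko--Gesztesy: since $f$ is holomorphic with $f'(\omega_1)=f''(\omega_1)=0$ and $f'''(\omega_1)=2\omega_1(e_1-e_2)(e_1-e_3)=2\omega_1|e_1-e_2|^2>0$, one indeed has a local normal form $f(k)-f(\omega_1)=h(k)^3$, so $Z_u$ is locally three analytic curves crossing at $\pi/3$ angles, and the branching of $k\mapsto\lambda=-\omega_1^2\wp(k)$ at $\omega_1$ doubles these angles to $2\pi/3$; this replaces the paper's citation of \cite[Theorem~1.1(v)]{BG} and its monodromy-trace expansion $\tau(\lambda)=a_0+(\lambda-\lambda_0)^k+\cdots$. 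Your bifurcation argument (saddle on the real axis when $\eta_1+\omega_1e_1<0$, on the line $\operatorname{Re}k=\omega_1$ --- where $u\equiv 0$ --- when $\eta_1+\omega_1e_1>0$) is essentially equivalent to the paper's shorter argument that the critical value $\lambda(k^*)=\eta_1\omega_1$ is real and lies in the real part $[-\omega_1^2e_1,\infty)$ of the spectrum precisely when $\eta_1+\omega_1e_1\geq 0$.

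The genuine gap is in the core claim: \emph{uniqueness} of the exceptional curve. You defer exactly the decisive step --- proving $G'(\beta)>0$ at every zero of $G(\beta)=\eta_1+e_1$ --- and call it ``the main obstacle''; neither of your suggested routes closes it (a numerical location of $j_*$ can never exclude further roots, and the sign of $G'$ on the zero locus is precisely what must be proved). The paper's key idea, absent from your proposal, is to rewrite \eqref{critCond} as the vanishing of a period integral: since the condition is invariant under affine changes of the spectral variable, one may normalise $e_2=i$, $e_3=-i$, $e_1=t\in\mathbb{R}$, so that \eqref{critCond} becomes $\Phi(t)=0$ with
\begin{gather*}
\Phi(t)=\oint_\gamma\sqrt{\frac{E-t}{4\big(E^2+1\big)}}\,{\rm d}E,
\end{gather*}
the contour $\gamma$ encircling the cut from $t$ to $\infty$. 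Differentiating under the integral sign,
\begin{gather*}
\Phi'(t)=-\frac{1}{2}\oint_\gamma\frac{{\rm d}E}{\sqrt{4\big(E^2+1\big)(E-t)}}<0,
\end{gather*}
because the integrand is positive on $(t,\infty)$; strict monotonicity gives uniqueness immediately, and existence then needs only two sign evaluations (the paper checks $\Phi(-2)>0$, $\Phi(2)<0$). This one-line differentiation replaces the whole modular-forms computation you flag as the obstacle. A secondary inaccuracy: in your $\beta\to 0^+$ limit you write $e_1=\wp(2i\beta)\sim -1/(4\beta^2)$ as if the lattice were fixed, but the lattice itself degenerates (its points accumulate in vertical columns of spacing $4\beta$), so the constant is wrong; the conclusion $G\to-\infty$ is believable but requires a more careful estimate, whereas your $\beta\to\infty$ limit is fine.
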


\begin{proof}
First of all, since both $\eta_1$ and $\omega_1$ are real the degeneracy condition $\eta_1+\omega_1 e_j=0$ implies that the corresponding $e_j$ must be real too, so $e_j=e_1$.

Recall now that $2\omega$ and $2\eta$ are the complete f\/irst and second kind elliptic integrals, which can be also written as integrals
\begin{gather*}
2\omega=\oint_\gamma \frac{{\rm d}E}{\sqrt{4(E-e_1)(E-e_2)(E-e_3)}},\qquad 2\eta=-\oint_\gamma \frac{E{\rm d}E}{\sqrt{4(E-e_1)(E-e_2)(E-e_3)}}
\end{gather*}
over the corresponding closed contour $\gamma$ in the complex domain. In our case the contour $\gamma$ is going around $e_1$ and $\infty$, or, equivalently, around the two complex conjugated roots $e_2$ and $e_3$. The degeneracy condition~(\ref{critCond}) can be rewritten then as{\samepage
\begin{gather}
\label{critCond2}
\oint_\gamma \frac{(E-e_1){\rm d}E}{\sqrt{4(E-e_1)(E-e_2)(E-e_3)}}=0,
\end{gather}
which also appeared in \cite{BG}.}

We have to show that there exists exactly one such curve. It would be convenient to use the parametrisation of the rhombic curves by f\/ixing $e_2=i$, $e_3=-i$ and $e_1=t\in \mathbb R$ (Weierstrass's restriction $e_1+e_2+e_3=0$ is violated, but this is not essential). We have to show that $\Phi(t)=0$ has the only real solution $t=t_*$, where
\begin{gather*}
\Phi(t):=\oint_\gamma \frac{(E-t){\rm d}E}{\sqrt{4(E-t)(E-i)(E+i)}}=\oint_\gamma \sqrt{\frac{E-t}{4(E^2+1)}}{\rm d}E.
\end{gather*}
Its derivative has the form
\begin{gather*}
\frac{{\rm d}}{{\rm d}t}\Phi(t)=-\frac{1}{2}\oint_\gamma\frac{{\rm d}E}{\sqrt{4(E^2+1)(E-t)}},
\end{gather*}
which is clearly negative since the integrand is positive on the half-line $E\in(t,\infty)$. Thus $\Phi(t)$ is strictly monotonically decreasing on the whole real line, which proves the uniqueness of the solution $t_*$. To show the existence it is enough to check that $\Phi(-2)>0$ and $\Phi(2)<0$ (e.g., numerically).

Numerical calculations give the value of the $j$-invariant
\begin{gather*}
j=1728\frac{g_2^3}{g_2^3-27g_3^2}
\end{gather*}
of the corresponding elliptic curve to be $j_*\approx 243.797$. One can see the corresponding period lattice $\mathcal L_*$ in the top right corner of Fig.~\ref{Fig:rhombkSpec}.

The tripod structure of the corresponding spectrum follows from a very general result by Batchenko and Gesztesy \cite[Theorem 1.1(v)]{BG}.
Its image is shown in the top right corner of Fig.~\ref{Fig:rhombSpec}.

The equal angles at the intersection point, which is also a general result due to Batchenko and Gesztesy \cite{BG}, is readily verif\/ied. Indeed, $\lambda$ is contained in the spectrum if and only if $\tau(\lambda)=\mathrm{tr} M(\lambda)\in [-2,2]\subset\mathbb{C}$, where $M(\lambda)$ is the monodromy matrix. Since $M(\lambda)$ depends analytically on $\lambda$, we have locally
\begin{gather*}
\tau(\lambda)=a_0+(\lambda-\lambda_0)^k+\cdots
\end{gather*}
for some $k\in\mathbb{N}$. If $a_0=\pm 2$, which corresponds to $\lambda_0$ being an end point of the spectral arc, we have an $k$-pod with angles $2\pi/k$, see e.g.~tripod in Fig.~\ref{Fig:rhombSpec}. If $a_0\in(-2,2)$, then we have $k$ arcs intersecting at $\lambda_0$ with angles between neighbouring arcs being $\pi/k$, see, e.g., the $k=2$ cases in the bottom two plots in Fig.~\ref{Fig:rhombSpec}.

To study the non-singularity condition recall that the critical points $k^*$ of the function $u(k)$ are given by $\eta + \omega\wp(k^*)=0$.
This means that the corresponding value of $\lambda(k^*)=-\omega^2\wp(k^*)=\eta\omega$ is real in this case. The non-singularity condition corresponds to the case when this value does not belong to the spectrum. Since the real part of the spectrum is $(-\omega^2e_1,\infty)$ it holds if\/f
$\eta\omega<-\omega^2 e_1$, which is equivalent to $\eta+\omega e_1<0$.
\end{proof}

\section[A qualitative analysis of the $m=2$ case]{A qualitative analysis of the $\boldsymbol{m=2}$ case}
Consider now brief\/ly what happens in the case $m=2$. The spectral curve of the corresponding Lam\'e operator
\begin{gather}\label{cplxLameOp2}
L=-\frac{{\rm d}^2}{{\rm d}x^2}+6\omega^2\wp(\omega x+z_0),
\end{gather}
has the form
\begin{gather}\label{specCurve}
w^2=\big(z^2-3g_2\big)\big(4z^3-9g_2z-27g_3\big).
\end{gather}

We restrict the attention to the rhombic case when $\Delta=g_2^3-27g_3^2<0$ and $\omega=\omega_1$. Setting $g_3=1$, this corresponds to $g_2\in(-\infty,3)$.

For $g_2\in(0,3)$, the roots of the f\/irst factor in the right-hand side of \eqref{specCurve}
\begin{gather*}
z^2-3g_2=0
\end{gather*}
yield two real endpoints $\lambda=\omega^2 z=\pm\omega^2\sqrt{3g_2}$, whereas the second factor
\begin{gather*}
4z^3-9g_2z-27g_3=0
\end{gather*}
gives one real positive endpoint as well as a complex conjugate pair of endpoints.

Hence we expect that for $g_2\in(0,3)$ the spectrum of~\eqref{cplxLameOp2} consists of two real intervals, one of which extends to inf\/inity, and one arc connecting the two complex endpoints. Moreover, as $g_2\to 0$, the f\/inite interval collapses to a double point. We can see this in the f\/irst two plots in Fig.~\ref{Fig:rhombSpec2}, produced numerically using the software~{\sl R}. Note that in the special case $g_2=0$, the spectrum consists of only two arcs.

For $g_2\in(-\infty,0)$, the f\/irst factor yields two purely imaginary endpoints $\lambda=\pm i\omega^2\sqrt{-3g_2}$, which suggests a spectrum consisting of two arcs each connecting two endpoints in the upper- and lower- half-plane, respectively, and a real interval extending to inf\/inity (see the last plot in Fig.~\ref{Fig:rhombSpec2}).

\begin{figure}[t]\centering
\includegraphics[width=0.75\textwidth]{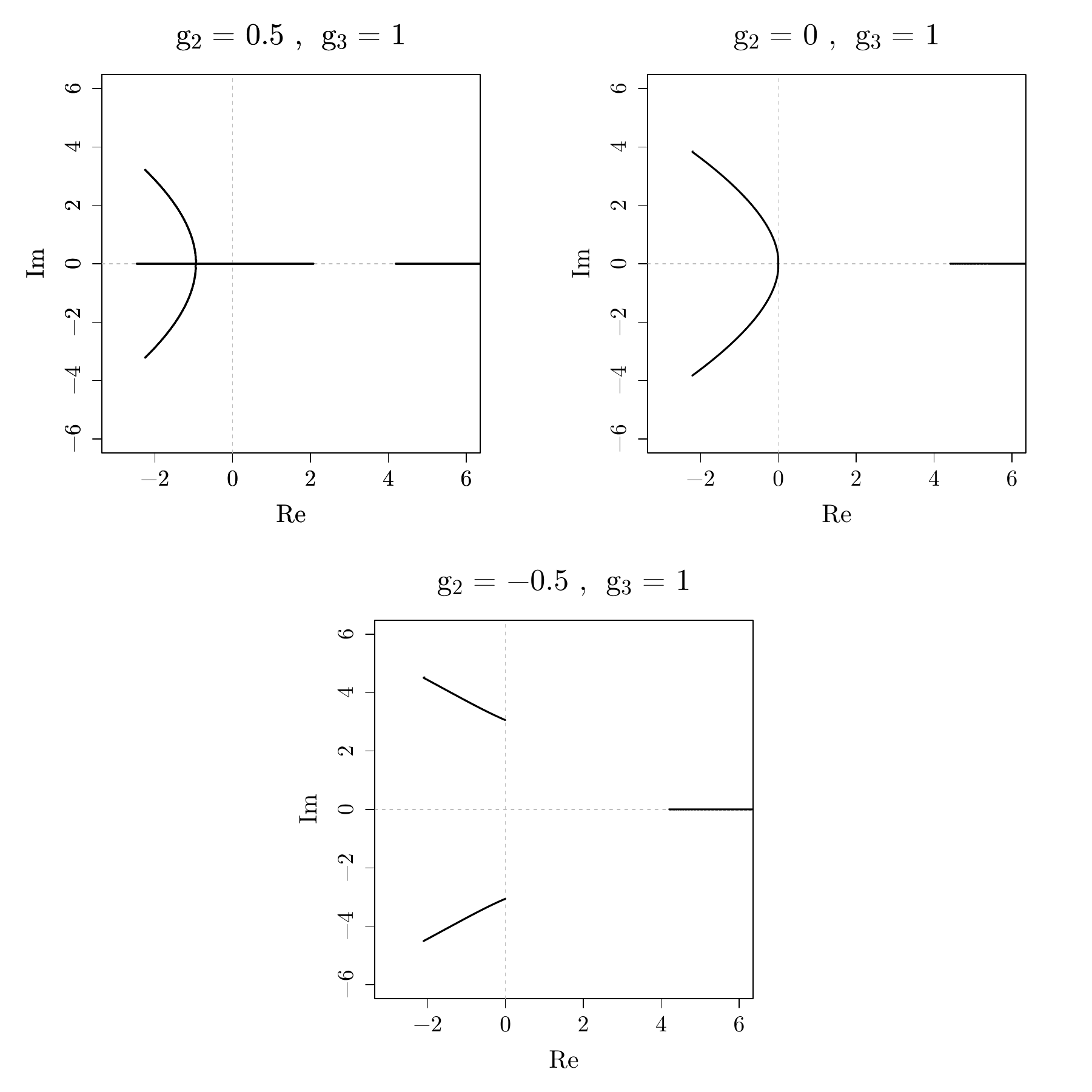}
\caption{Dif\/ferent types of spectra of the complex Lam\'e operator~\eqref{cplxLameOp2} with $g_3=1$, depending on the sign of $g_2 \in (-\infty,3)$.}
\label{Fig:rhombSpec2}
\end{figure}

\section{Concluding remarks}
The spectral theory of non-self-adjoint operators, in particular Schr\"odinger operators with complex potentials, has attracted signif\/icant attention in the last two decades, see, e.g.,~\cite{Davies2, Davies, Weikard}. Nevertheless, it is fair to say that it is still a far less complete theory than in the self-adjoint case. In particular, we feel there is a need for more examples, which can be studied explicitly (like the complex harmonic oscillator in \cite{Davies2}).

In our paper we used the example of the classical Lam\'e equation, whose general solution was found by Hermite in the 19th century. This example was already brief\/ly discussed in a much wider framework in \cite{BG, GW1}, but we believe that the complete picture was missing even in the simplest $m=1$ case. We used Hermite's explicit formulae and Morse theory ideas to study the structure of the spectrum of complex Lam\'e operators for two dif\/ferent types of real elliptic curves with rectangular and rhombic lattices. The rigorous analysis of higher $m$ cases seems to be dif\/f\/icult since the explicit formulae quickly become quite complicated as~$m$ grows, see \cite{BE, GV2, GV}.

A closely related interesting problem is to study the spectrum of the dif\/ference version of the Lam\'e operator
\begin{gather*}
 \mathcal L=\frac{\theta_1(x-m\eta)}{\theta_1(x)}T^\eta+\frac{\theta_1(x+m\eta)}{\theta_1(x)}T^{-\eta},
\end{gather*}
where $T^\eta$ is the shift operator def\/ined by $T^\eta\psi(x)=\psi(x+\eta)$, and $\theta_1(x,\tau)$ is the odd Jacobi theta function (see \cite{Zab} and references therein). When $\eta=\frac{p}{q}$ is rational we have an operator with periodic coef\/f\/icients, in general complex. We expect that the geometry of the corresponding spectrum (in particular, the band structure in the real case) depends on the arithmetic of~$p$ and~$q$.

\subsection*{Acknowledgements}
We are grateful to Jenya Ferapontov, John Gibbons and Anton Zabrodin for very useful and encouraging discussions, and especially to Boris Dubrovin, who many years ago asked one of us (APV) about the position of open gaps in the spectra of Lam\'e operators. We would like to thank Professor Gesztesy for his interest in our work and for pointing out further relevant references, including~\cite{BG} and~\cite{GW1}. The work of WAH was partially supported by the Department of Mathematical Sciences at Loughborough University as part of his PhD studies.

\pdfbookmark[1]{References}{ref}
\LastPageEnding

\end{document}